\renewcommand{\tilde}{\widetilde}
\newcommand{\C}{\mathbb{C}}
\newcommand{\N}{\mathbb{N}}
\newcommand{\Z}{\mathbb{Z}}
\newcommand{\Hil}{\mathcal{H}}
\DeclareMathOperator{\lspan}{span}
\DeclareMathOperator{\ran}{ran}
\numberwithin{equation}{section}
\theoremstyle{plain} %% This is the default, anyway
\newtheorem{thm}{Theorem}[section]
\newtheorem{cor}[thm]{Corollary}
\newtheorem{lem}[thm]{Lemma}
\theoremstyle{definition}
\newtheorem{defn}{Definition}[section]
\theoremstyle{remark}
\newtheorem{remark}{Remark}[section]
\newtheorem{ex}{Example}
\begin{document}

\title{The Schur-Horn Theorem for Operators with Three Point Spectrum}

\author{John Jasper}

\date{}

\begin{abstract} We characterize the set of diagonals of the unitary orbit of a self-adjoint operator with three points in the spectrum. Our result gives a Schur-Horn theorem for operators with three point spectrum analogous to Kadison's result for orthogonal projections \cite{k1,k2}.
\end{abstract}

\thanks{The author wishes to thank Marcin Bownik for many discussions and comments leading to the improvement of this paper.}

\address{Department of Mathematics, University of Missouri, Columbia, MO 65211--4100, USA}

\email{jasperj@missouri.edu}

\keywords{diagonals of self-adjoint operators, Schur-Horn theorem, Pythagorean theorem, the Carpenter's theorem}

\subjclass[2010]{Primary: 47B15, Secondary: 46C05}
\date{\today}
\date{}

\maketitle

\section{Introduction}

The goal of this paper is to establish an analogue of the Schur-Horn theorem on an infinite dimensional separable Hilbert space for operators with three points in the spectrum. That is, we will give necessary and sufficient conditions for a countable sequence $\{d_{i}\}$ to be the diagonal of a self-adjoint operator with eigenvalues $\{0,A,B\}$ with specified (possibly infinite) multiplicities.

This paper falls into a broader category of research that aims at finding an analogue of the Schur-Horn theorem for operators on a separable infinite dimensional Hilbert space. Recently there has been a great deal of progress by a number of authors. The work of Gohberg and Markus \cite{gm} and Arveson and Kadison \cite{ak} extended the Schur-Horn theorem to positive trace class operators. More recently Kaftal and Weiss \cite{kw} have extended this to all positive compact operators. Antezana, Massey, Ruiz, and Stojanoff \cite{amrs} established a connection between this problem and frame theory, as well as establishing some necessary and sufficient conditions. See \cite{cas,cl,cfklt,kl} for more on this problem from a frame theory perspective. Other work in this area includes the study of $\rm{II}_{1}$ factors by Argerami and Massey \cite{am,am2} and normal operators by Arveson \cite{a}. Neumann \cite{neu} proved what may be considered an approximate Schur-Horn theorem since it is given in terms of the $\ell^{\infty}$-closure of the set of diagonal sequences. Bownik and the author \cite{mbjj} established a variant of the Schur-Horn theorem for the set of locally invertible positive operators.

Of particular interest for our purposes is Kadison's theorem \cite{k1,k2}. For any orthogonal projection $P$, this theorem gives an explicit characterization of the set of diagonal sequences of the unitary orbit of $P$. This can be considered as an infinite dimensional extension of the Schur-Horn theorem for operators with two points in the spectrum. It is a natural next step to consider operators with three points in the spectrum. In this paper we extend the Schur-Horn theorem to such operators.

We would like to emphasize two significant qualitative differences between Kadison's theorem and our extension to operators with three point spectrum. The necessary and sufficient condition for a sequence to be the diagonal of a projection is a single trace condition, that is an equation involving sums of diagonal terms. The requirements for a sequence to be the diagonal of an operator with three point spectrum involve both a trace condition and a majorization inequality.

Also distinct from the case of operators with two point spectrum, it is possible for two non-unitarily equivalent operators with three point spectrum to have the same diagonal. For projections the dimension of the kernel and range (i.e.\ the multiplicities of $0$ and $1$) can be recovered from the diagonal. Indeed, if $\{d_{i}\}$ is the diagonal of a projection $P$, then
\[\dim\ran P = \sum d_{i}\quad\text{and}\quad \dim\ker P = \sum (1-d_{i}).\]
However, for operators with three point spectrum the multiplicities cannot in general be determined from the diagonal, see Remark \ref{rmk2}.

This leads to two distinct extensions of the Schur-Horn theorem for operators with three point spectrum. In the case where the multiplicities of eigenvalues are not given we have the following general theorem characterizing diagonals of operators with three point spectrum.

\begin{thm}\label{3pt} Let $0<A<B<\infty$ and let $\{d_{i}\}_{i\in I}$ be a countable sequence in $[0,B]$ with $\sum d_{i} = \sum (B-d_{i}) = \infty$. Define
\begin{equation}\label{CandD}C = \sum_{d_{i}<A}d_{i}\quad\text{and}\quad D=\sum_{d_{i}\geq A}(B-d_{i}).\end{equation}
There is a positive operator $E$ with diagonal $\{d_{i}\}_{i\in I}$ and $\sigma(E) = \{0,A,B\}$ if and only if one of the following holds:
\begin{enumerate}
\item $C=\infty$
\item $D=\infty$
\item $C,D<\infty$ and there exist $N\in\N$ and $k\in\Z$ such that
\begin{equation}\label{3pttrace} C-D = NA+kB\end{equation}
\begin{equation}\label{3ptmaj} C\geq (N+k)A.\end{equation}
\end{enumerate}
\end{thm}

The assumption that $\sum d_{i} = \sum (B-d_{i}) = \infty$ is not a true limitation. Indeed, the summable case $\sum d_{i}<\infty$ requires more restrictive conditions which can be deduced from parts (a) and (b) of Theorem \ref{fullthm}. For the proof of Theorem \ref{3pt} we refer the reader to Theorem \ref{3ptg}.

Theorem \ref{fullthm} is our second extension of the Schur-Horn theorem which gives a complete list of characterization conditions of diagonals of operators with prescribed multiplicities. Before we state the full theorem, we need one convenient definition.

\begin{defn} Let $E$ be a bounded operator on a Hilbert space. For $\lambda\in\C$ define
\[m_{E}(\lambda) = \dim\ker(E-\lambda).\]
\end{defn}

\begin{thm}\label{fullthm} Suppose $0<A<B<\infty$, let $\{d_{i}\}_{i\in I}$ be a countable (possibly finite) sequence in $[0,B]$, and suppose $N,K,Z\in\N$. Define the sets
\[I_{1} = \{i\in I:d_{i}<A\},\ I_{2} = \{i\in I:d_{i}\geq A\},\ J_{2} = \{i\in I_{2}:d_{i}<(A+B)/2\},\ J_{3} = I_{2}\setminus J_{2}.\]
Let $C$ and $D$ be as in \eqref{CandD} and define the constants (each possibly infinite)
\[C_{1} = \sum_{i\in I_{1}}(A-d_{i}),\ C_{2} = \sum_{i\in J_{2}}(d_{i}-A),\ C_{3} = \sum_{i\in J_{3}}(B-d_{i}).\]
The following table gives necessary and sufficient conditions for $\{d_{i}\}$ to be the diagonal of a positive operator $E$ with $\sigma(E)=\{0,A,B\}$ and the specified multiplicities.
\[
\begin{array}{l|c|c|c|c}
 & m_{E}(0) & m_{E}(A) & m_{E}(B) & \mathrm{Condition}\\
\hline
& & & & \\ [-1.8ex]
(a) & Z & N & K & |I|=Z+N+K\\
& & & & \displaystyle{\sum_{i\in I}d_{i} = NA+KB},\ C\geq (N+K-|I_{2}|)A\\[2.5ex]
\hline
& & & & \\ [-1.8ex]
(b) & \infty & N & K & |I_{1}| = \infty,\\
& & & & \displaystyle{\sum_{i\in I}d_{i} = NA+KB},\ C\geq (N+K-|I_{2}|)A\\ [2.5ex]
\hline
& & & & \\ [-1.8ex]
(c) & \infty & \infty & \infty & C+D=\infty\\ [0.6ex]
\hline
& & & & \\ [-1.8ex]
(d) & \infty & N & \infty & C+D = \infty\\
& & & & or\\
& & & & C,D<\infty,\ |I_{1}|= |I_{2}|=\infty,\\
& & & & \exists\,k\in\Z\text{ such that }C-D=NA+kB,\ C\geq A(N+k)\\
\hline
& & & & \\ [-1.8ex]
(e) & Z & \infty & \infty & C_{1}\leq AZ,\ C_{2}+C_{3} = \infty\\
& & & & or\\
& & & & |I_{1}\cup J_{2}|=|J_{3}|=\infty,\ C_{1}\leq AZ,\ C_{2},C_{3}<\infty\\
& & & & \exists\,k\in\Z\text{ such that }C_{1}-C_{2}+C_{3} = (Z-k)A+kB\\
\hline
& & & & \\ [-1.8ex]
(f) & Z & \infty & K & |I|=\infty,\ C_{1}\leq AZ\\
& & & &  \displaystyle{\sum_{i\in I}(d_{i}-A) = K(B-A)-ZA}\\
\end{array}
\]
\end{thm}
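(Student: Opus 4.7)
The proof handles the six cases of the table, but the involution $E \mapsto BI - E$ (which sends $\{d_i\}$ to $\{B - d_i\}$ and spectral multiplicities $(Z, N, K)$ to $(K, N, Z)$) pairs case (b) with case (d), pairs the two disjunctive subcases of (e) with each other, and similarly symmetrizes (c) and (f). This halves the bookkeeping, and I would treat the cases bottom-up from the simplest.

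Case (a) should be derived from the classical finite-dimensional Schur--Horn theorem applied to the multiset $(B^{K}, A^{N}, 0^{Z})$. The trace identity $\sum d_i = NA + KB$ is immediate, while the only partial-sum majorization inequality that can fail under the constraint $d_i \in [0, B]$ is the one at position $N + K$; rearranging it via $I = I_1 \sqcup I_2$ produces exactly $C \geq (N+K - |I_2|)A$. Case (c) is essentially Theorem \ref{3pt} augmented by the control $m_E(A) = N$: I would first show that the hypotheses of case (c) imply those of Theorem \ref{3pt} (noting that $|I_1|=|I_2|=\infty$ forces $\sum d_i = \sum (B-d_i) = \infty$), and then reopen the construction from Theorem \ref{3pt} to show that $m_E(A)$ can be fixed to any prescribed $N \in \N$. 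Case (f) is immediate from Theorem \ref{3pt}, since all three infinite multiplicities can absorb any finite mismatch by shifting countably many diagonal indices among the three eigenspaces.

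Cases (b), (d), (e) are hybrids: I would split $I = I' \sqcup I''$ with $I'$ carrying the finite multiplicities and $I''$ carrying the infinite ones. In case (b), choose a finite $I'$ containing enough indices on each side of $A$ so that case (a) realizes $N$ and $K$ there, and take $E$ to be $0$ on $I''$; the trace identity and the inequality $C \geq (N+K-|I_2|)A$ translate to the analogous constraints on the finite block. Case (d) is its symmetric partner under $E \mapsto BI - E$. Case (e) splits off the finite $Z$ and the finite $B$-eigenvectors (when applicable) into a finite block, after which the residual problem on $I''$ is a two-point spectrum problem on $\{0, A\}$ (or $\{A, B\}$) to which Kadison's Theorem applies; the shifted operator $E - AI$ then has an at-most two-point spectrum structure on an infinite set, matching the $C_1, C_2, C_3$ constants with Kadison's trace condition.

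The principal obstacle is prescribing the multiplicity $m_E(A)$ precisely. Theorem \ref{3pt} certifies the existence of an operator with spectrum contained in $\{0, A, B\}$ but does not pin down the dimension of each eigenspace. I expect the technical core to be a finite-rank adjustment lemma allowing one to increment or decrement $m_E(A)$ by one without altering the diagonal or the infinite multiplicities at $0$ and $B$; iterating, one reaches any prescribed $N \in \N$. The delicate transition between the two alternatives in (c) and (e), where one or more of $C, D, C_2, C_3$ passes from finite to infinite, will need separate treatment, most likely by exhausting with finite sub-diagonals and taking a block-diagonal limit.
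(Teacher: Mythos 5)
Your proposal has a correct high-level plan in places (case (a) from classical Schur--Horn, case (f) from Theorem~\ref{suff2}), but there are three concrete gaps.

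First, the claimed symmetry pairing is wrong. You correctly state that $E\mapsto BI-E$ sends multiplicities $(Z,N,K)$ to $(K,N,Z)$, but then you conclude it pairs (b) with (d). Case (b) has shape $(\infty,N,K)$, which the involution sends to $(K,N,\infty)$ --- the \emph{unwritten} case with infinite multiplicity only at $B$, precisely as the paper notes after the theorem statement. Case (d) has shape $(Z,\infty,K)$, which the involution sends to $(K,\infty,Z)$, i.e.\ another instance of (d) itself. Case (e), $(Z,\infty,\infty)$, maps to $(\infty,\infty,Z)$, the other unwritten case. So (b) and (d) are genuinely independent; the paper in fact derives (d) not from (b) by symmetry but from the case-(e) machinery (Theorems~\ref{nec7} and~\ref{suff4} via Corollary~\ref{part(d)}), exploiting that $K<\infty$ forces $|J_3|<\infty$ and $C_2,C_3<\infty$.

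Second, your construction for (b) --- choose a finite $I'$, realize $N$ and $K$ there by case (a), and ``take $E$ to be $0$ on $I''$'' --- fails because the prescribed $d_i$ for $i\in I''$ need not vanish. Since $|I_1|=\infty$ and $\sum d_i<\infty$, the $d_i$'s can be small and positive on an infinite set, so no finite $I'$ exhausts the nonzero entries. The paper instead invokes the finite-rank Schur--Horn theorem (Theorem~\ref{frsh}) which handles infinitely many nonzero diagonal entries directly, separating out only the literal zeros.

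Third, the proposed ``finite-rank adjustment lemma'' to increment or decrement $m_E(A)$ by one while fixing the diagonal cannot exist. In case (c), any $E$ with spectrum $\{0,A,B\}$, $m_E(A)=N$, $m_E(0)=m_E(B)=\infty$, and $C,D<\infty$ satisfies $C-D=NA+kB$ for some integer $k$ (Theorem~\ref{nec4}). Replacing $N$ by $N+1$ would force $k'=k-A/B$ to be an integer, which is impossible since $0<A<B$. The set of realizable $N$'s is a finite arithmetic set determined by the Diophantine constraint together with the majorization inequality; one cannot step through it one unit at a time. The technique the paper actually uses to pin down $N$ is Lemma~\ref{ops}, which ``moves'' mass among diagonal entries (decreasing some, increasing others, conserving the total) so that a finite block can be carved out where the finite-rank Schur--Horn applies with the prescribed multiplicity, and the residual block is handled by Kadison's theorem. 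You would need some version of this moving-mass lemma, not an eigenvalue-multiplicity bump.
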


Note that in the preceding theorem we left out the case where only $B$ has infinite multiplicity and the case where only $B$ has finite multiplicity. However, these two remaining cases follow easily using symmetry arguments by applying parts (b) and (e) to the operator $BI-E$ and the sequence $\{B-d_{i}\}$. Also, observe that case (a) corresponds to the finite dimensional case, and hence it is the classical Schur-Horn theorem (for operators with three eigenvalues), albeit written in a new form. Finally, in this paper we only consider the case of separable Hilbert spaces, and thus the indexing set $I$ is always taken to be a countable (possibly finite) set. We will use the notation $C$ and $D$ given in \eqref{CandD} as well as the notation introduced in Theorem \ref{fullthm} throughout the rest of the paper.

The proof of Theorem \ref{fullthm} breaks into 4 distinct parts. The summable cases (a) and (b) do not require many new techniques since they reduce to the study of trace class operators. In Section 3 parts (a) and (b) are relatively easily deduced from the work of Arveson-Kadison \cite{ak}. The remaining 3 parts rely heavily on a technique, which was introduced in \cite{mbjj}, of ``moving'' diagonal entries to more favorable configurations (see Lemma \ref{ops}), where it is possible to construct required operators. In Section 4 we deal with the case (c) involving three (or more) eigenvalues of infinite multiplicity. The necessity of the condition in (c) follows from Theorem \ref{nec4}, the sufficiency follows from Theorem \ref{suff2}. Much more involved combinatorial arguments are needed in Section 5 to deal with case (d) involving two outer eigenvalues with infinite multiplicities. Part (d) is proved in Theorem \ref{N<infty}. In Section 6 we analyze the cases (e) and (f) where at least one of outer eigenvalues has finite multiplicity. The proofs of the necessity and the sufficiency in these last two cases require even more subtle combinatorial arguments which is partially evidenced by the complicated nature of the characterization conditions. In Theorem \ref{nec7} we show that the conditions in part (e) are necessary, while in Theorem \ref{suff4} we show that they are sufficient. Finally, part (f) of Theorem \ref{fullthm} is proved in Corollary \ref{part(f)}.

We finish the paper with an application of Theorem \ref{fullthm} in Section 7. Given a sequence $\{d_{i}\}$ in $[0,1]$ we are interested in determining the set of numbers $A\in(0,1)$ for which there exists a positive operator with spectrum $\{0,A,1\}$ and diagonal $\{d_i\}$. We show that this set is either finite or the full open interval $(0,1)$. Finally, we look at some specific sequences $\{d_{i}\}$ and explicitly calculate the set of possible $A$.

\section{Preliminaries}

Our arguments rely on the classical Schur-Horn theorem \cite{horn,schur}, which we state here.

\begin{thm}[Schur, Horn]\label{sh} Let $N\in\N$ and let $\{\lambda_{i}\}_{i=1}^{N}$ and $\{d_{i}\}_{i=1}^{N}$ be nonincreasing sequences of real numbers. There is an $N\times N$ self-adjoint matrix with eigenvalues $\{\lambda_{i}\}$ and diagonal $\{d_{i}\}$ if and only if
\begin{equation}\label{fmaj}\begin{split}
\sum_{i=1}^{n}d_{i} \leq \sum_{i=1}^{n}\lambda_{i} & \quad \text{for all }n\leq N\\
\sum_{i=1}^{N}d_{i} = \sum_{i=1}^{N}\lambda_{i}. & \\
\end{split}\end{equation}
\end{thm}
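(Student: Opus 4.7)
The plan is to prove the two directions of Theorem \ref{sh} separately, following the classical arguments of Schur (1923) and Horn (1954).

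First I would handle \emph{necessity}. Given an $N \times N$ Hermitian matrix $H$ with eigenvalues $\{\lambda_{i}\}$ and diagonal $\{d_{i}\}$, the spectral theorem gives $H = V^{*}\Lambda V$ for some unitary $V$ and $\Lambda = \mathrm{diag}(\lambda_{1},\ldots,\lambda_{N})$. A direct computation yields $d_{i} = \sum_{j} |V_{ji}|^{2}\lambda_{j}$, so if $S$ is the matrix with entries $S_{ij} = |V_{ji}|^{2}$, then $d = S\lambda$, and the unitarity of $V$ makes $S$ doubly stochastic. By Birkhoff's theorem $S$ is a convex combination of permutation matrices, hence $d$ lies in the convex hull of permutations of $\lambda$. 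The Hardy--Littlewood--P\'olya theorem then translates this into the majorization inequalities \eqref{fmaj}.

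For \emph{sufficiency}, I would proceed by induction on $N$, the base case $N=1$ being trivial. Assuming $\{\lambda_{i}\}$ and $\{d_{i}\}$ are nonincreasing and majorization holds, the condition forces $\lambda_{N} \leq d_{N} \leq \lambda_{1}$, so there is an index $k$ with $\lambda_{k+1} \leq d_{N} \leq \lambda_{k}$. I would introduce a real $2 \times 2$ rotation acting on the coordinates $k$ and $k+1$, with the angle chosen so that conjugating $\Lambda$ by it places $d_{N}$ in one of the two affected diagonal entries; the other becomes $\lambda_{k} + \lambda_{k+1} - d_{N}$, and all other diagonal entries remain equal to the corresponding $\lambda_{j}$. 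After a permutation that moves the entry $d_{N}$ to position $(N,N)$, I would restrict to the upper-left $(N-1) \times (N-1)$ block. This block is Hermitian with prescribed eigenvalues $\lambda_{1},\ldots,\lambda_{k-1}, \lambda_{k} + \lambda_{k+1} - d_{N}, \lambda_{k+2},\ldots,\lambda_{N}$, and I want it to have diagonal $d_{1},\ldots,d_{N-1}$. The inductive hypothesis applies provided the reduced majorization holds.

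The main obstacle is precisely this verification: after reordering the new eigenvalue list into nonincreasing order, one must check that the partial sums dominate $d_{1}+\cdots+d_{n}$ for every $n \leq N-1$. This requires a careful case analysis based on the position of $d_{N}$ relative to the $\lambda_{j}$'s, using the original majorization condition together with the fact that $\lambda_{k} + \lambda_{k+1} - d_{N}$ interpolates between $\lambda_{k+1}$ and $\lambda_{k}$. A cleaner alternative, which I would prefer if time allowed, is to first prove the Hardy--Littlewood--P\'olya characterization that $d \prec \lambda$ if and only if $d$ is obtained from $\lambda$ by a finite sequence of \emph{T-transforms} (doubly stochastic matrices supported on two coordinates). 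Each T-transform is realized by conjugation by a suitable $2 \times 2$ orthogonal matrix, and iterating produces the desired Hermitian matrix; this approach unifies the bookkeeping and avoids the ad hoc inductive comparison of partial sums.
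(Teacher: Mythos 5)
The paper does not prove Theorem~\ref{sh}; it is quoted from the original sources of Schur and Horn and used as a black box, so there is no in-paper argument to compare against. Judged on its own terms, your sketch is the standard and correct proof. One place the phrasing should be tightened: after the Givens rotation on coordinates $k,k+1$ and the permutation sending $k+1$ to $N$, the resulting matrix $A$ has its single nonzero off-diagonal pair at positions $(k,N)$ and $(N,k)$, so the upper-left $(N-1)\times(N-1)$ block is \emph{exactly} the diagonal matrix $\mathrm{diag}(\mu)$ with $\mu=(\lambda_1,\ldots,\lambda_{k-1},\,\lambda_k+\lambda_{k+1}-d_N,\,\lambda_{k+2},\ldots,\lambda_N)$. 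You do not literally ``restrict'' to this block and study its spectrum; rather you invoke the inductive hypothesis to find a unitary $V$ with $V^*\mathrm{diag}(\mu)V$ having diagonal $(d_1,\ldots,d_{N-1})$, and then conjugate the full $A$ by $V\oplus 1$, which leaves row and column $N$ and the $(N,N)$ entry $d_N$ untouched while producing the desired diagonal $d$ and preserving the spectrum $\lambda$.

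The verification you flag as the ``main obstacle'' is lighter than you fear: because $\lambda_{k+1}\le d_N\le\lambda_k$, the list $\mu$ written as above is already nonincreasing, so no reordering is required. The partial-sum check splits into two short cases. For $n<k$ the first $n$ entries of $\mu$ and $\lambda$ coincide, so $\sum_{i\le n}d_i\le\sum_{i\le n}\lambda_i=\sum_{i\le n}\mu_i$. For $k\le n\le N-1$ one computes $\sum_{i\le n}\mu_i=\sum_{i\le n+1}\lambda_i-d_N\ge\sum_{i\le n+1}d_i-d_N\ge\sum_{i\le n}d_i$, using $d_{n+1}\ge d_N$; equality at $n=N-1$ is the trace identity. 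Finally, your proposed T-transform alternative is not a genuinely separate route: the proof that $d\prec\lambda$ implies $d$ is a finite product of T-transforms applied to $\lambda$ is itself the same induction (choose $k$, replace $\lambda_k,\lambda_{k+1}$ by $d_N$ and $\lambda_k+\lambda_{k+1}-d_N$, verify reduced majorization), and realizing the chain by Givens rotations requires the same care that each rotation acts on a pair of coordinates still in diagonal position. It repackages but does not eliminate the bookkeeping.
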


In fact, we need a version of the Schur-Horn theorem for finite rank operators. This can be deduced from a theorem of Arveson and Kadison \cite[Theorem 4.1]{ak} or from a theorem of Kaftal and Weiss \cite[Theorem 6.1]{kw}.

\begin{thm}[Arveson, Kadison]\label{frsh} Let $\{\lambda_{i}\}_{i=1}^{N}$ be positive and nonincreasing. Let $\{d_{i}\}_{i=1}^{\infty}$ be nonnegative and nonincreasing. There is a rank $N$ positive operator with positive eigenvalues $\{\lambda_{i}\}$ and diagonal $\{d_{i}\}$ if and only if
\begin{equation}\label{frmaj}\begin{split}
\sum_{i=1}^{n}d_{i} \leq \sum_{i=1}^{n}\lambda_{i} & \quad \text{for all }n\leq N\\
\sum_{i=1}^{\infty}d_{i} = \sum_{i=1}^{N}\lambda_{i}. & \\
\end{split}\end{equation}
\end{thm}

We will also make extensive use of Kadison's theorem \cite{k1,k2}.

\begin{thm}[Kadison]\label{Kadison} Let $\{d_{i}\}_{i\in I}$ be a sequence in $[0,1]$ and $\alpha\in(0,1)$. Define
\[a=\sum_{d_{i}<\alpha}d_{i} \qquad b=\sum_{d_{i}\geq \alpha}(1-d_{i}).\]
There is a projection with diagonal $\{d_{i}\}_{i\in I}$ if and only if \begin{equation}\label{kadcond} a-b\in\Z\cup\{\pm\infty\},\end{equation}
with the convention that $\infty-\infty=0$.
\end{thm}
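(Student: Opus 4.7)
Let $\{e_i\}_{i\in I}$ be the orthonormal basis with respect to which diagonals are computed, and let $I_1=\{i\in I:d_i<\alpha\}$, $I_2=\{i\in I:d_i\geq\alpha\}$. Let $Q_j$ denote the coordinate projection onto $\overline{\lspan}\{e_i:i\in I_j\}$. The key preliminary observation is the one-line Hilbert--Schmidt computation
\[
\sum_{i\in I}\|(P-Q_2)e_i\|^2=\sum_{i\in I_1}d_i+\sum_{i\in I_2}(1-d_i)=a+b,
\]
which shows that $P-Q_2$ is Hilbert--Schmidt exactly when $a+b<\infty$. The diagonal of $P-Q_2$ has absolute sum $a+b$ and signed sum $a-b$.

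\textbf{Necessity.} Suppose $P$ has diagonal $\{d_i\}$. If $a=\infty$ or $b=\infty$, the condition $a-b\in\Z\cup\{\pm\infty\}$ is automatic under the stated convention, so assume $a+b<\infty$. Then $P-Q_2$ is Hilbert--Schmidt and hence compact, so the restriction $Q_2P:PH\to Q_2H$ is Fredholm. I would invoke the classical fact that for two orthogonal projections whose difference is trace class, the trace of that difference equals an integer-valued Fredholm index (the nonzero spectrum of $P-Q$ comes in $\pm\lambda$ pairs except at $\pm 1$, so only the finite-dimensional $+1$ and $-1$ eigenspaces contribute to the trace). The trace class-ness here follows from the Hilbert--Schmidt property together with the absolute summability of the diagonal. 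Since $\tr(P-Q_2)=a-b$ by summing the diagonal, we conclude $a-b\in\Z$.

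\textbf{Sufficiency.} I would construct $P$ as a block-diagonal operator $P=\bigoplus_k P_k$, where each $P_k$ is a finite-dimensional projection produced by applying the classical Schur--Horn Theorem \ref{sh} to a finite subcollection of the $d_i$'s. The combinatorial crux is to partition $I$ into finite blocks $G_k$ so that each partial sum $s_k:=\sum_{i\in G_k}d_i$ is an integer in $\{0,1,\ldots,|G_k|\}$; then $(s_k,\ldots)$ is majorized by $(1,\ldots,1,0,\ldots,0)$ with $s_k$ ones, so Schur--Horn yields $P_k$. In the cases $a=\infty$ or $b=\infty$, greedy pairing of entries from $I_1$ with entries from $I_2$ suffices: the divergent side supplies enough mass to reach the next integer threshold at each step. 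In the case $a,b<\infty$ one first uses the integer $n=a-b$ to absorb the imbalance in a single finite initial block (a simple Schur--Horn realizable configuration whose diagonal is an initial segment of $\{d_i\}$ with sum exactly equal to the number of $1$'s it should contain), after which the remaining tail has $a=b=0$ in the appropriate sense and can be handled by pairing small leftovers from $I_1$ with large leftovers from $I_2$ into $2$- or $3$-element groups summing to $1$.

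\textbf{Main obstacle.} The most delicate step is the necessity direction in the regime $a+b<\infty$: one has a Hilbert--Schmidt (not obviously trace class) operator $P-Q_2$ whose diagonal sums to $a-b$, and one must show this number is an integer. This cannot be extracted from trace additivity alone; it requires either the spectral symmetry argument for the difference of two projections, or an identification of the sum with an index. On the sufficiency side, the main technical burden is the combinatorial bookkeeping in the case $a,b<\infty$: showing that after one finite ``correction'' block absorbing the imbalance $n=a-b$, the remaining entries really can be exhaustively grouped into integer-sum blocks of bounded size, which requires a careful greedy argument using the finiteness of the tail sums $\sum_{i\in I_1,\,i>N}d_i$ and $\sum_{i\in I_2,\,i>N}(1-d_i)$.
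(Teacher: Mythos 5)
The paper cites this theorem from Kadison \cite{k1,k2} and does not include a proof, so there is no internal argument to compare against; your proposal has to stand on its own, and both directions contain genuine gaps.

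In the necessity direction, the pivotal claim ``the trace class-ness here follows from the Hilbert--Schmidt property together with the absolute summability of the diagonal'' is false. A Hilbert--Schmidt operator with absolutely summable (even identically zero) diagonal need not be trace class: take a block-diagonal operator with $2\times 2$ blocks $\left(\begin{smallmatrix}0 & a_k\\ a_k & 0\end{smallmatrix}\right)$ where $\sum a_k^2<\infty$ but $\sum|a_k|=\infty$. The same phenomenon occurs for $P-Q_2$: writing $P$ in block form over $H_1\oplus H_2$, the diagonal blocks $P_{11}$ and $I-P_{22}$ are trace class (with traces $a$ and $b$), and $P_{12}$ is Hilbert--Schmidt because $P_{12}P_{12}^*=P_{11}-P_{11}^2\leq P_{11}$, but $P_{12}$ need not be trace class. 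The ``essential codimension'' of $P$ and $Q_2$ is indeed an integer once $P-Q_2$ is compact, but the identification of that index with the conditionally convergent diagonal sum $a-b$ is exactly the hard part of Kadison's theorem and cannot be done by invoking $\tr(P-Q_2)$, which may not exist. You correctly flag this as the delicate step, but the proposed resolution does not work.

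In the sufficiency direction, the proposed block-diagonal decomposition into finite blocks $G_k$ with integer sums $s_k$ does not exist in general. Take $d_i=1-2^{-i}$ for $i\geq 1$: then (with $\alpha=1/2$) $a=0$, $b=1$, so $a-b=-1\in\Z$, yet no finite subcollection of the $d_i$ has an integer sum, since any finite sum has nonzero fractional part. The sequence is nevertheless the diagonal of the projection $I-vv^*$ where $v_i=2^{-i/2}$, which is not block-diagonal in any basis adapted to the $e_i$. The ``absorb the imbalance in an initial block, then pair into $2$- or $3$-element groups summing to $1$'' heuristic does not repair this; in the example there is simply nothing from $I_1$ left to pair with. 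A working construction must be allowed to modify entries within a block and compensate in a neighboring block — the mechanism captured by Lemma \ref{ops} in this paper — rather than insist on exact integer block sums, and this flexibility is essential in the $a,b<\infty$ regime.
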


\begin{remark} Observe that in Theorem \ref{Kadison}, if there exists a partition of $I=I_{1}\cup I_{2}$ such that
\[\sum_{i\in I_{1}}d_{i},\sum_{i\in I_{2}}(1-d_{i})<\infty,\quad\text{and}\quad\sum_{i\in I_{1}} d_{i} - \sum_{i\in I_{2}}(1-d_{i})\in\Z,\]
then we have $a-b\in\Z$ for all $\alpha\in(0,1)$. Thus, the existence of such a partition is also a sufficient condition for a sequence to be the diagonal of a projection. We will find use for these more general partitions in the sequel.\end{remark}

\section{Finite Rank Operators}

The following is an application of Theorems \ref{sh} and \ref{frsh}, which establishes parts (a) and (b) of Theorem \ref{fullthm}. Theorem \ref{fr3pt} is the analogue of Theorem 3.4 from \cite{mbjj} which characterizes the diagonals of finite rank operators such that $\{A,B\}\subset \sigma(E)\subset\{0\}\cup[A,B]$. 

\begin{thm}\label{fr3pt} Let $0<A<B<\infty$, let $\{d_{i}\}_{i\in I}$ be a summable sequence in $[0,B]$, and let $N,K\in\N$ with $N+K<|I|$. There is a positive rank $N+K$ operator $E$ with diagonal $\{d_{i}\}$, $\sigma(E)=\{0,A,B\}$, $m_{E}(A)=N$, and $m_{E}(B)=K$ if and only if
\begin{equation}\label{fr3pt1}\sum_{i\in I}d_{i}=NA+KB\end{equation}
\begin{equation}\label{fr3pt2}\sum_{d_{i}<A}d_{i}\geq (N+K-n_{0})A,\end{equation}
where $n_{0}=|\{i:d_{i}\geq A\}|$.
\end{thm}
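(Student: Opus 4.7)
The plan is to reduce both directions of the equivalence to the result of Arveson and Kadison, Theorem~\ref{frsh} (and its finite-dimensional analogue Theorem~\ref{sh} when $M < \infty$). The central observation is that an operator $E$ as described is a positive operator of rank $N+K$ whose nonincreasing list of nonzero eigenvalues is
\[
\lambda_1 = \cdots = \lambda_K = B, \qquad \lambda_{K+1} = \cdots = \lambda_{K+N} = A.
\]
Let $\{d_{(i)}\}$ denote the nonincreasing rearrangement of $\{d_i\}$; since basis permutations are unitary, the existence of $E$ with diagonal $\{d_i\}$ is equivalent to the existence of a unitarily equivalent operator with diagonal $\{d_{(i)}\}$, so we may work with the rearrangement. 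Note that $d_{(i)} \geq A$ precisely when $i \leq n_0$.

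For necessity, assume $E$ exists. The trace identity $\sum d_i = \tr E = KB + NA$ is exactly~\eqref{fr3pt1}. Applying the majorization half of Theorem~\ref{frsh} at index $n = n_0$ gives
\[
\sum_{i \in I_2} d_i \;=\; \sum_{i=1}^{n_0} d_{(i)} \;\leq\; \sum_{i=1}^{n_0} \lambda_i.
\]
Since $A < B$, in either case ($n_0 \leq K$ or $n_0 > K$) the right-hand side is bounded above by $KB + (n_0 - K) A$. Subtracting from~\eqref{fr3pt1} yields $\sum_{i \in I_1} d_i \geq (N + K - n_0) A$, which is~\eqref{fr3pt2}.

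For sufficiency, assume~\eqref{fr3pt1} and~\eqref{fr3pt2}; together they give the rearranged bound $\sum_{i \in I_2} d_i \leq KB + (n_0 - K) A$. The plan is to verify the hypotheses of Theorem~\ref{frsh} for $\{d_{(i)}\}$ against the eigenvalue list $\{\lambda_i\}_{i=1}^{N+K}$ above. The trace equality is~\eqref{fr3pt1}. For the majorization $\sum_{i=1}^n d_{(i)} \leq \sum_{i=1}^n \lambda_i$ with $n \leq N + K$, I split into three ranges. If $n \leq K$, then $\sum_{i=1}^n d_{(i)} \leq nB$ is immediate from $d_{(i)} \leq B$. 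If $K < n \leq n_0$, removing from $\sum_{i=1}^{n_0} d_{(i)}$ the $n_0 - n$ smallest terms $d_{(n+1)}, \ldots, d_{(n_0)}$ (each at least $A$) gives $\sum_{i=1}^n d_{(i)} \leq KB + (n-K)A$. If $n > \max(K, n_0)$, adding to $\sum_{i=1}^{n_0} d_{(i)}$ the extra terms $d_{(n_0+1)}, \ldots, d_{(n)}$ (each less than $A$) yields the same bound. Theorem~\ref{frsh} (or Theorem~\ref{sh} when $M < \infty$) then produces a positive rank-$(N+K)$ operator with diagonal $\{d_{(i)}\}$ and the prescribed spectrum and multiplicities; conjugating by the permutation unitary that undoes the rearrangement yields the desired $E$.

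The main obstacle is essentially organizational: handling the two cases $n_0 \leq K$ and $n_0 > K$ uniformly within the single majorization~\eqref{fr3pt2}. Once one recognizes that this condition, in its equivalent form $\sum_{i \in I_2} d_i \leq KB + (n_0 - K) A$, encodes the entire Schur-Horn chain of inequalities for this restricted three-eigenvalue configuration, the reduction to Theorem~\ref{frsh} is automatic.
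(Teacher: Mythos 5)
Your argument is essentially the same reduction to Theorem~\ref{frsh} (and Theorem~\ref{sh} for $M<\infty$) that the paper uses, and the three-range verification of the majorization chain for $\{d_{(i)}\}$ against $\{\lambda_i\}$ is correct and matches the paper's computation. However, there is one genuine gap at the very first move: you assert that you may pass to the nonincreasing rearrangement $\{d_{(i)}\}$ of $\{d_i\}$, but when $M=\infty$ this rearrangement need not exist. If $\{d_i\}$ has infinitely many strictly positive terms and also at least one zero term (or, more generally, infinitely many positive terms plus any terms whose value is a limit point from above of the positive terms), then no bijection of $\mathbb{N}$ produces a nonincreasing enumeration -- after the zero all entries would have to vanish, but there are infinitely many positive entries. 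In that situation both the statement ``the existence of $E$ with diagonal $\{d_i\}$ is equivalent to the existence of one with diagonal $\{d_{(i)}\}$'' and the closing step ``conjugating by the permutation unitary that undoes the rearrangement'' are referencing an object that does not exist, so the sufficiency proof does not go through as written.

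The paper addresses this explicitly: it first proves sufficiency for positive nonincreasing sequences, and then for an arbitrary admissible $\{d_i\}$ it extracts the subsequence $\{d'_i\}$ of strictly positive terms (which can always be arranged nonincreasingly, since the values $\geq\eps$ form a finite set for each $\eps>0$), applies Theorem~\ref{frsh} to build an operator $E'$ with diagonal $\{d'_i\}$, and then takes $E = E' \oplus \mathbf{0}$ where $\mathbf{0}$ is a zero operator of the appropriate dimension to absorb the discarded zero entries. Your proof needs this extra splitting step (or an equivalent device) to be complete; with it, the rest of your argument goes through unchanged. The necessity direction has the same cosmetic issue (it refers to $d_{(i)}$), but there the fix is even easier since only the finitely many terms $\geq A$ matter.
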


\begin{proof} We will first prove the theorem under the assumption that $\{d_{i}\}$ can be arranged in nonincreasing order. Setting $M = |I|\in\N\cup\{\infty\}$, we may assume our sequence is given by $\{d_{i}\}_{i=1}^{M}$ in nonincreasing order.

To prove that \eqref{fr3pt1} and \eqref{fr3pt2} are necessary, assume $E$ is a positive operator with diagonal $\{d_{i}\}_{i\in I}$, $\sigma(E)=\{0,A,B\}$, $m_{E}(A)=N$ and $m_{E}(B)=K$. The operator $E$ has finite rank, hence it is of trace class with trace equal to $NA+KB$; this is \eqref{fr3pt1}. The eigenvalues sequence of $E$ written in nonincreasing order is given by
\begin{equation}\label{fr3pt4}\lambda_{i}=\begin{cases}
B & i=1,2,\ldots,K\\
A & i=K+1,\ldots,K+N\\
0 & i>K+N.
\end{cases}\end{equation}
Using Theorem \ref{sh} (or Theorem \ref{frsh} if $|I|=\infty$) we see that
\begin{equation}\label{fr3pt3}\sum_{d_{i}\geq A}d_{i} = \sum_{i=1}^{n_{0}}d_{i} \leq \sum_{i=1}^{n_{0}}\lambda_{i}\leq KB+(n_{0}-K)A.\end{equation}
To see the last inequality in \eqref{fr3pt3}, consider separately the cases where $n_{0}\geq K$ and $n_{0}<K$. Using \eqref{fr3pt3} we have
\[\sum_{d_{i}<A}d_{i} = NA+KB-\sum_{d_{i}\geq A}d_{i}\geq NA+KB-(KB+(n_{0}-K)A)=A(N+K-n_{0}),\]
which is \eqref{fr3pt2}.

Next, assume that \eqref{fr3pt1} and \eqref{fr3pt2} hold. Define the sequence $\{\lambda_{i}\}_{i=1}^{M}$ as in \eqref{fr3pt4}. By Theorem \ref{sh} or Theorem \ref{frsh} it is enough to show that
\begin{equation}\label{eq1}\sum_{i=1}^{m}d_{i}\leq\sum_{i=1}^{m}\lambda_{i}\end{equation}
for all $m\leq M$, since the second condition in either \eqref{fmaj} or \eqref{frmaj} follows from the assumption \eqref{fr3pt1}. Note that \eqref{eq1} holds for $m\leq K$, since $d_{i}\leq B$ for all $i\in I$. For $m>K+N$ we have
\[\sum_{i=1}^{m}d_{i}\leq \sum_{i=1}^{M}d_{i}=\sum_{i=1}^{M}\lambda_{i}=\sum_{i=1}^{m}\lambda_{i},\]
so \eqref{eq1} holds for $m>K+N$.

First, we wish to show that \eqref{eq1} holds for $m=n_{0}$. From the above we may assume $K<n_{0}\leq K+N$. Using \eqref{fr3pt2} we have
\begin{equation*}
\sum_{i=1}^{n_{0}}d_{i}  = NA+KB-\sum_{d_{i}<A}d_{i} \leq NA+KB-A(N+K-n_{0}) = KB + (n_{0}-K)A = \sum_{i=1}^{n_{0}}\lambda_{i}.
\end{equation*}
Now, if $K<m<n_{0}$ then we have
\begin{equation*}
\sum_{i=1}^{m}d_{i} = \sum_{i=1}^{n_{0}}d_{i}-\sum_{i=m+1}^{n_{0}}d_{i}\leq \sum_{i=1}^{n_{0}}\lambda_{i}-(n_{0}-m)A=\sum_{i=1}^{m}\lambda_{i}.
\end{equation*}
Finally, if $n_{0}<m\leq K+N$ then
\begin{equation*}
\sum_{i=1}^{m}d_{i} = \sum_{i=1}^{n_{0}}d_{i}+\sum_{i=n_{0}+1}^{m}d_{i}\leq \sum_{i=1}^{n_{0}}\lambda_{i}+(m-n_{0})A=\sum_{i=1}^{m}\lambda_{i}.
\end{equation*}

To complete the proof we assume $\{d_{i}\}$ cannot be arranged in nonincreasing order. This is the case exactly when $\{d_{i}\}$ has infinitely many nonzero terms and some terms equal to zero.

Assume we have an operator $E$ with diagonal $\{d_{i}\}$, $\sigma(E)=\{0,A,B\}$, $m_{E}(A)=N$ and $m_{E}(B)=K$. Let $\{e_{i}\}_{i\in I}$ be an orthonormal basis such that $d_{i} =\langle Ee_{i},e_{i}\rangle$ for each $i\in I$. Set $I_{0} = \{i\in I:d_{i} = 0\}$. Since $E$ is positive, $e_{i}\in\ker E$ for each $i\in I_{0}$, and thus $\overline{\lspan}\{e_{i}\}_{i\in I\setminus I_{0}}$ is invariant under $E$. Let $E'$ be $E$ acting on the space $\overline{\lspan}\{e_{i}\}_{i\in I\setminus I_{0}}$. The operators $E$ and $E'$ have the same multiplicities at $A$ and $B$, and $E'$ has diagonal $\{d_{i}\}_{i\in I\setminus I_{0}}$. The diagonal of $E'$ is a strictly positive summable sequence, and thus it can be arranged in nonincreasing order. By the above argument, we see that \eqref{fr3pt1} and \eqref{fr3pt2} hold for $\{d_{i}\}_{i\in I\setminus I_{0}}$. Clearly this implies that they hold for the full sequence $\{d_{i}\}_{i\in I}$.

Finally, assume that \eqref{fr3pt1} and \eqref{fr3pt2} hold. The sequence $\{d_{i}\}_{i\in I\setminus I_{0}}$ also satisfies \eqref{fr3pt1} and \eqref{fr3pt2}. Moreover, $\{d_{i}\}_{i\in I\setminus I_{0}}$ can be arranged in nonincreasing order. By the above argument, there is a positive operator $E'$ with diagonal $\{d_{i}\}_{i\in I\setminus I_{0}}$, $\sigma(E') \subset \{0,A,B\}$, $m_{E'}(A)=N$, and $m_{E'}(B)=K$. Let $\bf{0}$ be the zero operator on a separable Hilbert space with dimension $|I_{0}|$. The operator $E=E'\oplus\bf{0}$ has the desired spectral properties and diagonal. \end{proof}

\section{Three or more eigenvalues of infinite multiplicity}

In this section we will classify the diagonals of operators with exactly three eigenvalues, each with infinite multiplicity. This will yield part (c) of Theorem \ref{fullthm}. We will also show that a sequence with $C+D=\infty$ is the diagonal of a very general class of operators.

Theorem \ref{nec4} is the analogue of \cite[Theorem 5.1]{mbjj}, and it is used in showing the necessity of part (c) of Theorem \ref{fullthm}. In particular, Theorem \ref{nec4} shows that $C,D<\infty$ implies that only $0$ and $B$ can have infinite multiplicity. Thus, $C+D=\infty$ is a necessary condition for a sequence to be the diagonal of a self-adjoint operator with at least three infinite multiplicities.

\begin{thm}\label{nec4} Let $0<A<B<\infty$ and let $E$ be a positive operator on a Hilbert space $\Hil$ with $\sigma(E)=\{0,A,B\}$. Let $\{e_{i}\}_{i\in I}$ be an orthonormal basis for $\Hil$ and set $d_{i}=\langle Ee_{i},e_{i}\rangle$. If $C,D<\infty$ then $N:=m_{E}(A)<\infty$ and there is some $k\in\Z$ such that
\begin{equation}\label{nec4.1}C-D = NA + kB ,\end{equation}
\begin{equation}\label{nec4.5}C\geq (N+k)A.\end{equation}
\end{thm}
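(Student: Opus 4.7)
The plan is to express everything in terms of the diagonals of the spectral projections of $E$ and then reduce the integrality claim to Kadison's theorem applied to the projection $P_B$. Let $P_0,P_A,P_B$ be the spectral projections of $E$, and set $r_i=\langle P_0e_i,e_i\rangle$, $q_i=\langle P_Ae_i,e_i\rangle$, $p_i=\langle P_Be_i,e_i\rangle$, so that $r_i+q_i+p_i=1$ and
\[ d_i=Aq_i+Bp_i, \qquad B-d_i=Br_i+(B-A)q_i. \]
Let $I_1=\{i:d_i<A\}$ and $I_2=\{i:d_i\ge A\}$.

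First I would show $N<\infty$. The two identities above give $q_i\le d_i/A$ on $I_1$ and $q_i\le(B-d_i)/(B-A)$ on $I_2$, so
\[ N=\sum_i q_i \;\le\; \frac{C}{A}+\frac{D}{B-A}<\infty. \]

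Next, summing $d_i$ over $I_1$ and $B-d_i$ over $I_2$ and rearranging gives
\[ C-D \;=\; A\sum_i q_i \;+\; B\bigl(a-b\bigr) \;=\; NA+B(a-b), \]
where $a:=\sum_{I_1}p_i$ and $b:=\sum_{I_2}(1-p_i)$. Both are finite: $a\le C/B$ since $Bp_i\le d_i$, and $b=\sum_{I_2}r_i+\sum_{I_2}q_i\le D/B+N$. Since $\{p_i\}$ is the diagonal of the projection $P_B$, Kadison's theorem applies; finiteness of $a,b$ forces the sets $I_1\cap\{p_i\ge 1/2\}$ and $I_2\cap\{p_i<1/2\}$ to be finite, so the threshold-$1/2$ partition has finite sums differing from $(a,b)$ by integers. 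Kadison's integrality statement then yields $k:=a-b\in\Z$, so $C-D=NA+kB$.

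For the inequality, I would simply expand $C-(N+k)A$: using $C=Ba+A\sum_{I_1}q_i$, $N=\sum_{I_1}q_i+\sum_{I_2}q_i$, and $b=\sum_{I_2}(1-p_i)$, everything collapses to
\[ C-(N+k)A \;=\; (B-A)\sum_{I_1}p_i \;+\; A\sum_{I_2}r_i \;\ge\; 0. \]
The main obstacle is the integrality argument for $k$: Kadison's theorem is naturally stated for partitions defined by a threshold on the projection's diagonal, whereas our partition is defined by a threshold on $d_i$. Bridging these requires the observation that any two partitions with finite $a,b$-type sums produce differences that agree modulo $\Z$, which reduces the general case to the $\alpha=1/2$ case handled directly by Kadison.
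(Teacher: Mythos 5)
Your proposal is correct and follows essentially the same route as the paper: express $d_i$ in terms of the diagonals of the spectral projections, bound $\sum q_i$ on $I_1$ and $I_2$ separately to show $m_E(A)<\infty$, show the two partial sums $a=\sum_{I_1}p_i$ and $b=\sum_{I_2}(1-p_i)$ are finite and invoke Kadison's theorem to get $k=a-b\in\Z$, then recover the trace identity and the majorization inequality algebraically. Your derivation of the inequality as the exact identity $C-(N+k)A=(B-A)\sum_{I_1}p_i+A\sum_{I_2}r_i$ is a slight tidying of the paper's inequality chain (the paper drops the nonnegative $(B-A)\sum_{I_1}p_i$ term), and your explicit justification for applying Kadison to the $d_i$-threshold partition rather than a $p_i$-threshold partition is exactly what the paper's Remark after Theorem 2.3 supplies (though one should say that $a_{1/2}-b_{1/2}$ differs from $a-b$ by an integer, rather than that $a_{1/2}$ differs from $a$ by an integer).
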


\begin{proof} Define the sets $I_{1}=\{i:d_{i}<A\}$ and $I_{2} = \{i:d_{i}\geq A\}$. Let $P$ be the orthogonal projection onto $\ker(E-A)$, and let $Q$ be the projection onto $\ker(E-B)$. This yields the decomposition $E=AP+BQ$. Define $p_{i} = \langle Pe_{i},e_{i}\rangle$ and $q_{i} = \langle Qe_{i},e_{i}\rangle$, so that $d_{i} = Ap_{i}+Bq_{i}$. By \cite[Theorem 5.1]{mbjj}, the operator $B(Q+P) - E = (B-A)P$ is of trace class and thus finite rank. From this we conclude

\begin{equation}\label{nec4.2}N = m_{E}(A) = \sum_{i\in I}p_{i}<\infty.\end{equation}

Define
\[a:=\sum_{i\in I_{1}}q_{i} = \frac{1}{B}\sum_{i\in I_{1}}(d_{i} - Ap_{i}) \leq \frac{1}{B}\sum_{i\in I_{1}}d_{i} = \frac{C}{B}<\infty,\]
and
\[b: = \sum_{i\in I_{2}}(1-q_{i}) = \frac{1}{B}\sum_{i\in I_{2}}(B-d_{i} + Ap_{i})\leq \frac{D}{B}+ \frac{A}{B}\sum_{i\in I_{2}}p_{i}.\]
Using \eqref{nec4.2} we see that $b<\infty$. By Theorem \ref{Kadison} there exists $k\in\Z$ such that $a-b = k$.

Now, we calculate
\begin{align*}
C-D & = \sum_{i\in I_{1}}(Ap_{i} + Bq_{i}) - \sum_{i\in I_{2}}\left(B-Ap_{i} - Bq_{i}\right)\\
 & = \sum_{i\in I}Ap_{i} + B\left(\sum_{i\in I_{1}}q_{i} - \sum_{i\in I_{2}}(1-q_{i})\right) = NA + kB,\\
\end{align*}
which shows \eqref{nec4.1}.

Finally, we calculate
\begin{align*}k(B-A) + D & = (a-b)(B-A) + \sum_{i\in I_{2}}(B-Bq_{i} - Ap_{i}) = a(B-A)+bA-\sum_{i\in I_{2}}Ap_{i}\\
 & \geq  bA - bB + bB - \sum_{i\in I_{2}}Ap_{i} = bA - \sum_{i\in I_{2}}Ap_{i} = A\sum_{i\in I_{2}}(1-p_{i}-q_{i}).
\end{align*}
Together with the fact that $p_{i} + q_{i}\leq 1$, this shows $k(B-A)+D\geq 0$, or $kB+D\geq kA$. Combining this with \eqref{nec4.1} gives \eqref{nec4.5}.
\end{proof}

Next, we will show that the condition $C+D=\infty$ is sufficient for $\{d_{i}\}$ to be the diagonal of any diagonalizable self-adjoint operator with the property that the largest and smallest eigenvalues have infinite multiplicity. In particular, we will prove the following theorem, which will complete the proof of part (c) of Theorem \ref{fullthm}.

\begin{thm}\label{suff2} Let $\Lambda\subset[0,B]$ be a countable set with $0,B\in \Lambda$. Set $n_{0}=n_{B}=\infty$, and for each $\lambda\in \Lambda\cap(0,B)$ let $n_{\lambda}\in\N\cup\{\infty\}$. If $\{d_{i}\}_{i\in I}$ is a sequence in $[0,B]$ such that for some (and hence all) $\alpha\in (0,B)$ we have
\[\sum_{d_{i}<\alpha}d_{i} + \sum_{d_{i}\geq\alpha}(B-d_{i})=\infty,\]
then there is a positive diagonalizable operator $E$ with diagonal $\{d_{i}\}$, eigenvalues $\Lambda$ and $m_{E}(\lambda)=n_{\lambda}$ for each $\lambda\in \Lambda$.
\end{thm}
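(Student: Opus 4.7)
The plan is to realize $E$ as an orthogonal direct sum of a sequence of finite-rank ``inner'' blocks (each produced via Theorem \ref{fr3pt}) together with a ``bulk'' projection-type operator with spectrum $\{0,B\}$ and both multiplicities infinite (produced by Kadison's Theorem \ref{Kadison}). By the symmetry $(d_i,\Lambda,E)\leftrightarrow(B-d_i,B-\Lambda,BI-E)$, which swaps $C$ and $D$, I may assume without loss of generality that $C=\infty$ for some (hence every) $\alpha\in(0,B)$. This hypothesis forces both $\sum d_i=\infty$ and $\sum(B-d_i)=\infty$, which is needed so that the bulk projection has infinite rank and corank.

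Enumerate the inner eigenvalues with multiplicity as $\mu_1,\mu_2,\ldots$ (a possibly finite list). I iteratively pick pairwise disjoint finite sets $T_k\subset I$ and construct on each $\ell^2(T_k)$ a Hermitian block $E_k$ with diagonal $\{d_i\}_{i\in T_k}$ and spectrum in $\{0,\mu_k,B\}$, with $\mu_k$ appearing at least once. By Theorem \ref{fr3pt}, this reduces to choosing $T_k$ so that $\sum_{i\in T_k}d_i=N\mu_k+KB$ for some integers $N\geq 1$ and $K\geq 0$, together with the associated majorization inequality, which becomes automatic once $T_k$ contains enough reservoir entries from $\{i:d_i<\alpha\}$. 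The key invariant to maintain is that each $T_k$ uses only finitely many reservoir entries, so $C=\infty$ is preserved on the running complement $I\setminus\bigcup_{j\leq k}T_j$.

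The main obstacle is the trace-matching step, because the discrete set of subset sums $\{\sum_{i\in T}d_i:T\subset I\text{ finite}\}$ need not contain $N\mu_k+KB$ for any fixed $N$, for instance when the $d_i$ and $\mu_k$ are rationally related in an incompatible way. To resolve this I would use the flexibility provided by $C=\infty$ to approximate any target by choosing many reservoir entries, then fine-tune by swapping one or two entries; when $n_{\mu_k}\geq 2$, raising $N$ admits more trace solutions, and in the worst case I would bundle several consecutive inner eigenvalues into a single larger block whose target trace $\sum_jN_j\mu_j+KB$ admits the desired residue modulo $B$. The combinatorial argument guaranteeing that each $\mu$ can be produced with exactly its prescribed multiplicity $n_\mu$, while leaving enough reservoir for every subsequent stage and for the final Kadison construction, is the technical heart of the proof.

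Once all the $T_k$ are extracted, the residual diagonal on $T_\infty=I\setminus\bigcup_kT_k$ still has $C=\infty$, and hence $\sum_{i\in T_\infty}d_i=\sum_{i\in T_\infty}(B-d_i)=\infty$ by the $\alpha$-independence of the hypothesis noted immediately before the theorem. Applying Kadison's theorem to $\{d_i/B\}_{i\in T_\infty}$ yields a projection $P$ of infinite rank and corank with this diagonal, so $BP$ has spectrum $\{0,B\}$ with both multiplicities infinite, and $E=\bigoplus_kE_k\oplus BP$ is the desired operator with diagonal $\{d_i\}_{i\in I}$, spectrum $\Lambda$, and $m_E(\lambda)=n_\lambda$ for every $\lambda\in\Lambda$.
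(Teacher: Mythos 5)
Your overall architecture (a direct sum of blocks realizing the inner eigenvalues, plus a Kadison-type projection absorbing the bulk) is in the right spirit, but the step you yourself flag as ``the technical heart'' is a genuine gap, and it is exactly the point where the paper's proof takes a different and essential turn. You require finite sets $T_k\subset I$ with $\sum_{i\in T_k}d_i=N\mu_k+KB$ \emph{exactly}. The set of finite subset sums of $\{d_i\}$ is a fixed countable set determined by the data, and in general it contains no number of the form $N\mu_k+KB$ at all (e.g.\ when the $d_i$ span a $\Q$-vector space meeting $\Q\mu_k+\Q B$ only in $0$); no amount of ``swapping one or two entries'' or bundling several inner eigenvalues changes which sums are achievable, since you are still selecting from the same fixed values. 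Hypothesis $C=\infty$ gives you density of approximation but never exactness, so the block construction as stated cannot be completed.

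The paper circumvents this with Lemma \ref{ops}, the ``moving'' lemma: one first \emph{perturbs} finitely many diagonal entries (decreasing some entries below $\alpha$ and/or increasing some above, with the decrease and increase matched by a common amount $\eta_0$) to produce a modified sequence $\{\tilde d_i\}$ for which an exact eigenvalue hit is forced --- in Lemma \ref{suff1} one pushes a single entry $\tilde d_{i_0}$ up to equal $A$ exactly by draining mass from a finite reservoir $K_1$ --- and then part (ii) of that lemma guarantees that any operator with diagonal $\{\tilde d_i\}$ is unitarily equivalent to one with the original diagonal $\{d_i\}$. With this tool each inner eigenvalue $\lambda$ is realized with multiplicity exactly $1$ per block by a rank-one summand $A P$ sitting next to a Kadison projection, and the prescribed multiplicity $n_\lambda$ is obtained simply by partitioning $I$ into countably many pieces each still satisfying $C+D=\infty$ and taking $n_\lambda$ such blocks per $\lambda$. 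Without Lemma \ref{ops} (or an equivalent perturbation-plus-unitary-equivalence device) your trace-matching step fails, so the proof as proposed does not go through.
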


To prove Theorem \ref{suff2} we need two lemmas. The first lemma will also be used in later sections, for the proof see \cite[Lemmas 4.3 and 4.4]{mbjj}.

\begin{lem}\label{ops}
 Let $\{d_{i}\}_{i\in I}$ be a sequence in $[0,B]$. Let $F_0, F_1 \subset I$ be two disjoint finite subsets such that $\max\{d_{i}: i \in I_0\}\leq\min\{d_{i}: i \in I_1\}$. Let $\eta_{0}\geq 0$ and
\[
\eta_{0}\leq\min\bigg\{\sum_{i\in F_0} d_{i},\sum_{i\in F_1}
(B-d_{i}) \bigg\}.\]
(i) There exists a sequence $\{\tilde d_{i}\}_{i\in I}$  in $[0,B]$
satisfying
\begin{align}
\label{ops0}
\tilde d_i = d_i &\quad\text{for } i \in I \setminus (F_0 \cup F_1),
\\
\label{ops1}
\tilde d_i \leq d_i \quad i\in F_0,
&\quad\text{and}\quad
\tilde d_i \ge d_i, \quad i\in F_1,
\\
\label{ops2}
\eta_0+\sum_{i\in F_0}\tilde{d}_{i} =\sum_{i\in F_0}d_{i}
&\quad\text{and}\quad 
\eta_0+\sum_{i\in F_1} (B-\tilde{d}_{i})=\sum_{i\in F_1} (B-d_{i}).
\end{align}
(ii) For any self-adjoint operator $\tilde E$ on $\Hil$ with diagonal $\{\tilde d_{i}\}_{i\in I}$,
there exists an operator $E$ on $\Hil$ unitarily equivalent to $\tilde E$ with diagonal $\{d_{i}\}_{i\in I}$.
\end{lem}
The second lemma will serve as a building block for constructing the operators in Theorem \ref{suff2}.

\begin{lem}\label{suff1} Let $0<A<B<\infty$ and let $\{d_{i}\}_{i\in I}$ be a sequence in $[0,B]$. If $C+D=\infty$ then there is a positive operator $E$ with $\sigma(E)=\{0,A,B\}$, $m_{E}(0) = m_{E}(B) = \infty$, $m_{E}(A) = 1$, and diagonal $\{d_{i}\}$.\end{lem}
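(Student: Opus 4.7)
The plan is to construct an operator of the form $\tilde E = Avv^{*} + B\tilde P$, where $v$ is a unit vector supported on basis vectors $\{e_i\}_{i\in F}$ for a finite $F\subset I$ and $\tilde P$ is an orthogonal projection with $\tilde Pv = 0$, infinite rank, and infinite co-rank. Any such $\tilde E$ automatically has spectrum $\{0,A,B\}$ with $m_{\tilde E}(A)=1$ and $m_{\tilde E}(0)=m_{\tilde E}(B)=\infty$. If we can choose $F$ and a small modification $\{\tilde d_i\}$ of $\{d_i\}$ so that $\sum_{i\in F}\tilde d_i = A$ exactly, then setting $|\langle v,e_i\rangle|^2 = \tilde d_i/A$ for $i\in F$ and $\langle \tilde P e_i,e_i\rangle = \tilde d_i/B$ for $i\notin F$ makes the diagonal of $\tilde E$ equal to $\{\tilde d_i\}$; Lemma~\ref{ops}(ii) then delivers an operator $E$ unitarily equivalent to $\tilde E$ with diagonal $\{d_i\}$.

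Without loss of generality assume $C=\infty$ (the case $D=\infty$ is handled by the analogous construction built from the large-value indices). Since $C=\infty$, the set $J:=\{i\in I: 0<d_i<A\}$ is infinite. Enumerate its elements in nondecreasing order of $d$-value as $i_1,i_2,\ldots$, and let $n\geq 2$ be the smallest integer with $\sum_{k=1}^n d_{i_k}\geq A$. Put $\delta := \sum_{k=1}^n d_{i_k} - A \in [0,d_{i_n})$ and $F:=\{i_1,\ldots,i_n\}$. If $\delta=0$ set $\tilde d_i := d_i$ for all $i$. Otherwise, apply Lemma~\ref{ops} with singleton small group $(d_{i_n})$ and large group $(d_{i_{n+1}},\ldots,d_{i_{n+N}})$, where $N$ is chosen so that $N(B-A)>\delta$; the hypotheses hold because the increasing enumeration gives $d_{i_n}\leq d_{i_{n+k}}$ and each $d_{i_{n+k}}<A$ yields $B-d_{i_{n+k}}>B-A$. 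This produces $\tilde d_{i_n}=d_{i_n}-\delta$ (forced since the $a$-group is a singleton) and values $\tilde d_{i_{n+k}}\in[d_{i_{n+k}},B]$ whose $B$-complements sum to $\sum_{k=1}^N(B-d_{i_{n+k}})-\delta$; set $\tilde d_i := d_i$ for all other indices. In either case $\sum_{k=1}^n \tilde d_{i_k}=A$ and $\{\tilde d_i\}\subset[0,B]$.

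Next, define $v := \sum_{k=1}^n \sqrt{\tilde d_{i_k}/A}\,e_{i_k}$ (a unit vector) and invoke Kadison's theorem on $(\lspan F)^{\perp}$ to build a projection $\tilde P$ with diagonal $\{\tilde d_i/B\}_{i\in I\setminus F}$, extended by zero on $\lspan F$ so that $\tilde Pv=0$. Kadison's condition is automatic: $\sum_{i\notin F,\,\tilde d_i<A}\tilde d_i/B = \infty$, since the Lemma~\ref{ops} perturbation affects only finitely many indices while $C=\infty$. The same infiniteness together with $\sum_{I_1}(B-d_i)\geq (B-A)|I_1|=\infty$ forces $\tilde P$ to have both infinite rank and infinite co-rank, so a direct spectral computation shows $\tilde E := Avv^{*} + B\tilde P$ has $\sigma(\tilde E)=\{0,A,B\}$ with multiplicities $\infty,1,\infty$; by construction its diagonal in $\{e_i\}$ is $\{\tilde d_i\}$.

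Finally, Lemma~\ref{ops}(ii) applied with the small sequence $(d_{i_n})$, the large sequence $(d_{i_{n+1}},\ldots,d_{i_{n+N}})$, and the remaining $d_i$'s as the $c_k$ produces an operator $E$ unitarily equivalent to $\tilde E$ with diagonal $\{d_i\}$, as required. The main obstacle is arranging $\sum_F\tilde d_i=A$ exactly; this is finessed by the increasing enumeration of $J$, which guarantees an unbounded tail of indices $i_{n+1},i_{n+2},\ldots$ with $d_j\geq d_{i_n}$ available to absorb the residual mass $\delta$ via Lemma~\ref{ops}.
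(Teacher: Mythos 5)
Your overall architecture is the same as the paper's: perturb finitely many diagonal entries via Lemma \ref{ops}(i) so that the $A$-eigenvector can be realized exactly, invoke Theorem \ref{Kadison} to produce the $B\cdot(\text{projection})$ part on the remaining entries (whose sum is still infinite, so Kadison's condition and the infinite rank/corank are automatic), assemble $\tilde{E}$, and undo the perturbation with Lemma \ref{ops}(ii). The genuine gap is in your selection of $F$ and of the donor/receiver groups. You ``enumerate the elements of $J$ in nondecreasing order of $d$-value,'' but such an enumeration need not exist: for $d_{1}=3A/4$ and $d_{i}=A/4$ for $i\geq 2$, or for $d_{i}=A(1+2^{-i})/2$, there is no bijection $k\mapsto i_{k}$ with $d_{i_{1}}\leq d_{i_{2}}\leq\cdots$, since some value has infinitely many values strictly below it. This is not cosmetic, because the load-bearing claim --- that there is an unbounded supply of indices outside $F$ with $d_{j}\geq d_{i_{n}}$, which is what secures the hypothesis $\max\{a_{i}\}\leq\min\{b_{i}\}$ of Lemma \ref{ops} and lets you take $N$ with $N(B-A)>\delta$ --- can fail: if the values in $J$ strictly decrease to a positive limit, then once a donor $i_{n}$ is fixed only finitely many indices have value $\geq d_{i_{n}}$, and for $B-A$ small these need not provide enough capacity to absorb $\delta$.

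The paper avoids this entirely by running the mass transfer in the opposite direction, which removes any need to order an infinite set. One chooses $i_{0}\in I_{1}$ with $\sum_{d_{i}\leq d_{i_{0}}}d_{i}>A$ (possible since $C=\infty$: the sets $\{i\in I_{1}:d_{i}\leq d_{i_{0}}\}$ are upward directed and exhaust $I_{1}$), takes a finite $K_{1}\subseteq\{i\in I_{1}\setminus\{i_{0}\}:d_{i}\leq d_{i_{0}}\}$ with $\sum_{i\in K_{1}}d_{i}>A-d_{i_{0}}$, and applies Lemma \ref{ops}(i) with small group $K_{1}$, large group the singleton $\{i_{0}\}$, and $\eta_{0}=A-d_{i_{0}}$. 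This forces $\tilde{d}_{i_{0}}=A$ exactly, so the $A$-eigenspace is spanned by $e_{i_{0}}$ itself ($\tilde{E}=BQ\oplus A$ on a one-dimensional summand) and no spreading of $v$ over a finite block is needed; the ordering hypothesis of Lemma \ref{ops} holds by construction because the donors satisfy $d_{i}\leq d_{i_{0}}$. The rest of your argument (the Kadison step, the rank/corank count, the final use of Lemma \ref{ops}(ii), and the reduction of $D=\infty$ to $C=\infty$ via $d\mapsto B-d$) is correct. To salvage your version you would need to replace the nondecreasing enumeration by a selection of $F$ and donor(s) for which the receiver set is provably large enough; the paper's reversal is the cleaner fix.
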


\begin{proof} Assume $C=\infty$. There exists $i_{0}\in I_{1}=\{i\in I:d_{i}<A\}$ such that
\[\sum_{d_{i}\leq d_{i_{0}}}d_{i} >A.\]
This implies that
\[\sum_{\genfrac{}{}{0pt}{}{d_{i}\leq d_{i_{0}}}{i\neq i_{0}}}d_{i}> A-d_{i_{0}}.\]
Let $F_{0}$ be a finite subset of $\{i\in I_{1}\setminus\{i_{0}\}:d_{i}\leq d_{i_{0}}\}$ such that
\[\sum_{i\in F_{0}}d_{i}>A-d_{i_{0}}.\]
Apply Lemma \ref{ops} (i) with $F_{0}$ as above, $F_{1} = \{i_{0}\}$, and $\eta_{0}=A-d_{i_{0}}$ to obtain a sequence $\{\tilde{d}_{i}\}_{i\in I}$. Note that $\tilde{d}_{i_{0}}=A$ and since $F_{0}$ is finite
\[\sum_{i\in I_{1}\setminus\{i_{0}\}}\tilde{d}_{i} = \infty.\]
Theorem \ref{Kadison} implies there is a projection $Q$ such that $BQ$ has diagonal $\{\tilde{d}_{i}\}_{i\in I\setminus\{i_{0}\}}$. Moreover, we have
\[\dim\ker P = \sum_{i\in I\setminus\{i_{0}\}}\left(1-\frac{1}{B}\tilde{d}_{i}\right) = \infty = \frac{1}{B}\sum_{i\in I\setminus\{i_{0}\}}\tilde{d}_{i} = \dim\ran P.\]
Let $P$ be the identity on a one-dimensional Hilbert space. The operator $\tilde{E} = BQ\oplus AP$ has diagonal $\{\tilde{d}_{i}\}_{i\in I}$ as well as the desired spectrum and multiplicities. Finally, by Lemma \ref{ops} (ii) there is an operator $E$, unitarily equivalent to $\tilde{E}$, with diagonal $\{d_{i}\}_{i\in I}$. This completes the proof of the theorem when $C=\infty$.

Assume $D=\infty$. Define $d_{i}' = B-d_{i}$ for each $i\in I$. We have
\[\sum_{d_{i}'\leq B-A}d_{i}'=\sum_{d_{i}\geq A}(B-d_{i}) = D = \infty.\]
By the previous argument, there is a positive operator $E'$ with diagonal $\{d_{i}'\}$ and $\sigma(E')=\{0,B-A,B\}$, with $0$ and $B$ having infinite multiplicity and $B-A$ having multiplicity $1$. Clearly $E=B-E'$ has the desired properties.
\end{proof}

\begin{proof}[Proof of Theorem \ref{suff2}] If $\Lambda=\{0,B\}$ then Theorem \ref{Kadison} gives the desired operator. Thus, we may assume there is some $\lambda\in\Lambda$ with $0<\lambda<B$. Set $I_{1}=\{i\in I:d_{i}<\alpha\}$ and $I_{2}=\{i:d_{i}\geq\alpha\}$. Partition $I_{1}$ and $I_{2}$ into (possibly empty) sets $\{I_{1}^{\lambda}\}_{\lambda\in \Lambda}$ and $\{I_{2}^{\lambda}\}_{\lambda\in \Lambda}$ respectively, such that for each $\lambda\in \Lambda$
\[\sum_{i\in I_{1}^{\lambda}}d_{i} + \sum_{i\in I_{2}^{\lambda}}(B-d_{i})=\infty.\]
For each $\lambda\in \Lambda\cap(0,B)$ partition $I_{1}^{\lambda}$ and $I_{2}^{\lambda}$ into $n_{\lambda}$ (possibly empty) sets $\{I_{1}^{\lambda,n}\}_{n=1}^{n_{\lambda}}$ and $\{I_{2}^{\lambda,n}\}_{n=1}^{n_{\lambda}}$ respectively, such that for each $n=1,2,\ldots,n_{\lambda}$ we have
\[\sum_{i\in I_{1}^{\lambda,n}}d_{i} + \sum_{i\in I_{2}^{\lambda,n}}(B-d_{i})=\infty.\]
By Lemma \ref{suff1}, for each $\lambda\in \Lambda\cap(0,B)$ and each $n=1,2,\ldots,n_{\lambda}$ there is a self-adjoint operator $E_{\lambda,n}$ with diagonal $\{d_{i}\}_{i\in I_{1}^{\lambda,n}\cup I_{2}^{\lambda,n}}$ and $\sigma(E_{\lambda,n})=\{0,\lambda,B\}$ with infinite multiplicity at $0$ and $B$ and multiplicity $1$ at $\lambda$. Finally, set
\[E=\bigoplus_{\lambda\in \Lambda}\bigoplus_{n=1}^{n_{\lambda}}E_{\lambda,n},\]
and it is clear that $E$ has the desired diagonal and eigenvalues.
\end{proof}

In Theorem \ref{suff2} the spectrum of $E$ is the closure of $\Lambda$. To end this section we note that $C+D=\infty$ is a sufficient condition on a sequence to be the diagonal of a positive operator $E$ with $\sigma(E)=K$ for any compact set $K\subset[0,B]$. Simply let $\Lambda$ be a countable dense subset of $K$ and apply Theorem \ref{suff2} with any multiplicities $\{n_{\lambda}\}_{\lambda\in \Lambda}$. This gives us the following corollary.

\begin{cor}\label{suff3} Let $K\subset[0,B]$ be a compact set with $0,B\in K$. If $\{d_{i}\}_{i\in I}$ is a sequence in $[0,B]$ such that for some (and hence all) $\alpha\in (0,B)$ we have
\[\sum_{d_{i}<\alpha}d_{i} + \sum_{d_{i}\geq\alpha}(B-d_{i})=\infty,\]
then there is a positive diagonalizable operator $E$ with diagonal $\{d_{i}\}$ and $\sigma(E)=K$.
\end{cor}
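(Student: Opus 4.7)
The plan is exactly the reduction sketched in the prose preceding the corollary: obtain the desired operator by applying Theorem~\ref{suff2} to a countable dense subset of $K$. Since $K$ is a compact (hence separable) subset of $[0,B]$ containing both $0$ and $B$, I would choose a countable set $\Lambda$ with $\{0,B\} \subseteq \Lambda \subseteq K$ and $\overline{\Lambda} = K$. For each $\lambda \in \Lambda \cap (0,B)$, pick any multiplicity $n_\lambda \in \N \cup \{\infty\}$ (for concreteness, $n_\lambda = 1$ works). The hypothesis on $\{d_i\}$ is precisely the infinite-sum condition required by Theorem~\ref{suff2}, so that theorem yields a positive diagonalizable operator $E$ with diagonal $\{d_i\}$, eigenvalue set $\Lambda$, and the prescribed multiplicities $n_0 = n_B = \infty$ and $n_\lambda$ as chosen.

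All that remains is to identify $\sigma(E)$ with $K$. Because $E$ is diagonalizable with eigenvalue set exactly $\Lambda$, its spectrum coincides with $\overline{\Lambda}$: each eigenvalue lies in $\sigma(E)$ and $\sigma(E)$ is closed, giving $\overline{\Lambda} \subseteq \sigma(E)$, while conversely if $\mu \notin \overline{\Lambda}$ then $E - \mu I$ acts diagonally with entries of absolute value at least $\mathrm{dist}(\mu,\Lambda) > 0$, and is therefore boundedly invertible. Since $\Lambda$ was chosen so that $\overline{\Lambda} = K$, we conclude $\sigma(E) = K$, as required.

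There is essentially no obstacle in this argument; the mathematical content lies entirely in Theorem~\ref{suff2}, and what the corollary adds is only a routine topological observation. The only mild points to check are that the constraint $0,B \in \Lambda$ imposed by Theorem~\ref{suff2} is compatible with the hypothesis $0,B \in K$ (it is, trivially), and the standard fact that a self-adjoint diagonalizable operator has spectrum equal to the closure of its eigenvalue set.
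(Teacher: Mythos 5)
Your proposal matches the paper's argument exactly: pick a countable dense subset $\Lambda$ of $K$ containing $0$ and $B$, apply Theorem~\ref{suff2} with arbitrary multiplicities, and observe that the spectrum of the resulting diagonalizable operator is $\overline{\Lambda}=K$. The extra detail you supply about why $\sigma(E)=\overline{\Lambda}$ is a standard fact the paper leaves implicit, and it is correct.
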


\section{Outer eigenvalues with infinite multiplicity}

In this section we will establish part (d) of Theorem \ref{fullthm}, which is formulated in Theorem \ref{N<infty} below. Moreover, the proof of Theorem \ref{3pt} is given.

\begin{thm}\label{N<infty} Let $0<A<B<\infty$, let $\{d_{i}\}_{i\in I}$ be a sequence in $[0,B]$ and let $N\in\N$. There is a positive operator $E$ with $\sigma(E)=\{0,A,B\}$, $m_{E}(0)=m_{E}(B)=\infty$, $m_{E}(A)=N$, and diagonal $\{d_{i}\}_{i\in I}$ if and only if one of the following holds: 
\begin{enumerate}
\item $C+D=\infty$
\item $C,D<\infty$, $\sum d_{i}=\sum(B-d_{i})=\infty$, and there exists $k\in\Z$ such that
\begin{align}\label{cdfin2}C-D & = NA + kB\\
\label{cdfin3}C & \geq A(N+k).\end{align}\end{enumerate}
\end{thm}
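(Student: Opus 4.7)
I would split the argument into necessity, sufficiency for case (i), and sufficiency for case (ii), the last being the main obstacle.

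For necessity, suppose such an $E$ exists and $C+D<\infty$ (otherwise we are already in case (i)). I first observe that $\sum d_i=\sum(B-d_i)=\infty$ is forced: since $E\geq 0$ and $m_E(B)=\infty$, we have $\sum d_i=\tr(E)\geq B\cdot m_E(B)=\infty$, and symmetrically $\sum(B-d_i)=\tr(BI-E)\geq B\cdot m_E(0)=\infty$. Then Theorem \ref{nec4} immediately yields the integer $k\in\Z$ with $C-D=NA+kB$ and $C\geq(N+k)A$, placing us in case (ii). Sufficiency in case (i) is a direct application of Theorem \ref{suff2} to $\Lambda=\{0,A,B\}$ with multiplicities $n_0=n_B=\infty$ and $n_A=N$, which produces the desired operator immediately.

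For sufficiency in case (ii) the plan is to construct an auxiliary operator $\tilde E=A\,I_N\oplus BQ$, where $Q$ is a projection with infinite rank and infinite corank, whose diagonal $\{\tilde d_i\}$ differs from $\{d_i\}$ only on a controlled subset $F\subset I$; Lemma \ref{ops}(ii) then transfers $\tilde E$ to an operator $E$ with the original diagonal. Concretely, using that $|I_1|=|I_2|=\infty$ (consequences of the sum conditions combined with $C,D<\infty$), I would choose $F=F_1\cup F_2$ with $F_j\subset I_j$, where $F_1$ contains the ``large'' entries of $I_1$ together with some reserve indices at which $d_i$ is very close to $0$, and $F_2$ contains the ``small'' entries of $I_2$ together with reserves near $B$. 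Applying Lemma \ref{ops}(i) inside each $F_j$, I would redistribute mass within $F$ so that $\tilde d_i\in\{0,A,B\}$ on $F$ with exactly $N$ entries equal to $A$. The identity $C-D=NA+kB$ is what makes the global mass accounting consistent, and the majorization $C\geq(N+k)A$ provides the room needed so that Lemma \ref{ops} can be applied without violating $0\leq\tilde d_i\leq B$; depending on the signs of $k$ and $N+k$ the $A$-entries are manufactured from within $F_1$ (by lifting some $d_i<A$ to $A$ and lowering others to $0$), from within $F_2$ (by lowering some $d_i\geq A$ to $A$ and raising others to $B$), or from a combination. Once this is achieved, the remaining sequence (outside the $N$ positions at $A$), divided by $B$, has Kadison invariant equal to the integer $k$ after accounting for the removed $A$-entries, so Theorem \ref{Kadison} delivers the projection $Q$ of infinite rank and infinite corank.

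The heart of the proof, and the main obstacle, is this combinatorial construction of $F$ and the explicit mass redistribution within it. One must handle several sub-cases (the signs of $k$ and $N+k$, and whether the required mass is drawn from $F_1$, from $F_2$, or from both), carefully track which entries cross the threshold $A$ during redistribution and the resulting effect on $C$ and $D$, and verify all sign and boundedness constraints. The precise interplay between the trace identity $C-D=NA+kB$ and the majorization $C\geq(N+k)A$ is exactly what guarantees feasibility in every sub-case, and I expect this bookkeeping to constitute the bulk of the work.
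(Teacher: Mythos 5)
Your necessity argument and your treatment of case (i) are correct and coincide with the paper's: Theorem \ref{nec4} gives \eqref{cdfin2}--\eqref{cdfin3} once $C,D<\infty$, the non-summability of $\{d_i\}$ and $\{B-d_i\}$ follows from $E\geq BQ$ and $BI-E\geq B(I-P-Q)$, and Theorem \ref{suff2} disposes of (i). The substance of the theorem is the sufficiency of (ii), and there your proposal is only a plan whose central step, as stated, cannot be carried out. You ask to redistribute mass \emph{within} a finite set $F$ so that every entry of $F$ lands in $\{0,A,B\}$ with exactly $N$ entries equal to $A$. Lemma \ref{ops} conserves the total mass $\sum_{i\in F}d_i$, so this forces $\sum_{i\in F}d_i=NA+wB$ for some integer $w\geq 0$, which a finite subset of a given sequence will not satisfy in general. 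The fix is to demand less: only the $N$ entries at $A$ need to be exact. If $n_1$ of them are manufactured by raising entries of $I_1$ to $A$ (donating mass from smaller entries of $I_1$) and $n_2=N-n_1$ by lowering entries of $I_2$ to $A$ (dumping mass onto larger entries of $I_2$), one computes that the remaining sequence has $\tilde C-\tilde D=(C-n_1A)-(D-n_2(B-A))=(k+n_2)B$, so Kadison's theorem applies to it automatically for \emph{any} split; no further control of $F$ is needed. Feasibility of the split requires $n_1A\leq C$ and $n_2(B-A)\leq D$, and this is precisely where \eqref{cdfin3} enters: take $(n_1,n_2)=(N,0)$ if $k\geq 0$, $(0,N)$ if $k\leq -N$, and $(N+k,-k)$ if $-N<k<0$. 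You correctly guess that the majorization plays this role, but you do not verify it, and this verification is the actual content of the sufficiency proof.

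Two further points you would have to address. First, in the boundary cases $n_1A=C$ or $n_2(B-A)=D$ the donor set is necessarily infinite, so Lemma \ref{ops} (stated for finite sequences) does not apply; the paper's Case 3 handles exactly this by invoking Kadison's theorem directly to realize $\{d_i\}_{i\in I_1}$ as the diagonal of $AP$ with $\mathrm{rank}\,P=N+k$ and $\{B-d_i\}_{i\in I_2}$ as the diagonal of $(B-A)Q$ with $\mathrm{rank}\,Q=-k$, then takes $AP\oplus(B-(B-A)Q)$ --- note the $A$-eigenspace is \emph{not} located at diagonal entries equal to $A$. Second, the ordering hypothesis $\max\{a_i\}\leq\min\{b_i\}$ of Lemma \ref{ops} requires the recipients to be the extreme entries of $I_1$ (resp.\ $I_2$), which exist because $C,D<\infty$ force $d_i\to 0$ on $I_1$ and $d_i\to B$ on $I_2$; the degenerate situations where $0$ or $B$ fails to be a limit point need a separate (easy) reduction, which the paper does via Theorem \ref{fr3pt}. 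For comparison, the paper's generic construction does not manufacture exact $A$'s at all: it compresses trace $NA+kB$ into a finite block $K_1\subseteq I_1$, applies the finite-rank Schur--Horn theorem (Theorem \ref{fr3pt}) on that block to produce the entire $A$-eigenspace plus $k$ copies of $B$, and leaves a balanced remainder for Kadison. Your decomposition $AI_N\oplus BQ$ is viable and arguably cleaner in the generic case, but as submitted the proof of (ii) is incomplete.
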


\begin{proof} First, we note that the necessity direction is immediate. Indeed, if (i) fails then we have $C,D<\infty$ and we use Theorem \ref{nec4} to deduce \eqref{cdfin2} and \eqref{cdfin3}. Moreover, $\{d_{i}\}$ and $\{B-d_{i}\}$ are not summable since both $E$ and $B-E$ are positive operators with infinite dimensional range and finite spectrum, and thus they both have infinite trace.

Next, note that Theorem \ref{suff2} implies that (i) is sufficient. Lastly, we assume that (ii) holds, and we must show that the desired operator exists. However, the proof is quite complicated and requires considering four distinct cases. First, we make a couple of observations.

Recall that $I_{1} = \{i:d_{i}<A\}$ and $I_{2} = \{i:d_{i}\geq A\}$. Since $C,D<\infty$ and $\sum d_{i} = \sum (B-d_{i})=\infty$ it must be the case that $|I_{1}|=|I_{2}|=\infty$.

The following argument shows that it is enough to consider sequences $\{d_{i}\}$ with limit points at both $0$ and $B$. Assume $B$ is not a limit point of $\{d_{i}\}$. Since $D<\infty$, the set $I_{2}^{0}:=\{i\in I_{2}: d_{i}<B\}$ is finite. Assume $I_{2}^{0}$ has $M$ elements. Let $L\subset I_{2}\setminus I_{2}^{0}$ be a set with $|k|+1$ elements and define $K_{2}: = I_{2}^{0}\cup L$. If we consider the sequence $\{d_{i}\}_{i\in I_{1}\cup K_{2}}$, then we have
\begin{align*}
\sum_{i\in I_{1}\cup K_{2}}d_{i} & = C + (M+|k|+1)B - \sum_{i\in K_{2}}(B-d_{i})\\
 & = C + (M+|k|+1)B -D = NA + (M+|k|+k+1)B
\end{align*}
and
\[\sum_{\genfrac{}{}{0pt}{}{i\in I_{1}\cup K_{2}}{d_{i}<A}}d_{i} = C \geq (N+k)A = (N+M+|k|+k+1 - |K_{2}|)A.\]
By Theorem \ref{fr3pt}, there is a positive operator $E'$ with diagonal $\{d_{i}\}_{i\in I_{1}\cup K_{2}}$, $\sigma(E') = \{0,A,B\}$, $m_{E'}(0) = \infty$, $m_{E'}(A) = N$ and $m_{E'}(B) = M+|k|+k+1$. Let $I$ be the identity operator on an infinite dimensional Hilbert space. The operator $E = E'\oplus BI$ is as desired.

If $0$ is not a limit point, then we can use the above argument on the sequence $\{B-d_{i}\}$ to obtain an operator $F$ with diagonal $\{B-d_{i}\}$ and eigenvalues $0,B-A$ and $B$ which have multiplicities $\infty,N$ and $\infty$, respectively. Then $B-F$ is the desired operator. For the rest of the proof we can and will assume that both $0$ and $B$ are limit points of $\{d_{i}\}$.

{\bf{Case 1:}} Assume $k\geq0$. Since $B$ is a limit point of $\{d_{i}\}$ we have $D>0$ and thus $C=NA+kB+D>NA+kB$. There is a finite set $F_{0}\subset I_{1}$ such that
\[\sum_{i\in F_{0}}d_{i}> NA+kB\]
Since $0$ is a limit point of $\{d_{i}\}_{i\in I_{1}}$ and $F_{0}$ is finite we have $\sum_{i\in F_{0}}d_{i}<C$. Define
\[\eta_{0} := \sum_{i\in F_{0}}d_{i}-NA-kB < C-NA-kB = D.\]
There is a finite set $F_{1}\subset I_{2}$ such that
\[\sum_{i\in F_{1}}(B-d_{i})>\eta_{0}.\]
The sequences $\{d_{i}\}_{i\in F_{0}}$ and $\{d_{i}\}_{i\in F_{1}}$ are in $[0,B]$, satisfy $\max\{d_{i}\}_{i\in F_{0}}\leq\min\{d_{i}\}_{i\in F_{1}}$ and
\[\eta_{0}\leq\max\left\{\sum_{i\in F_{0}}d_{i},\sum_{i\in F_{1}}(B-d_{i})\right\}.\]
Apply Lemma \ref{ops} (i) with $F_{0}$, $F_{1}$, and $\eta_{0}$ as above, to obtain a sequence $\{\tilde{d}_{i}\}_{i\in I}$. From \eqref{ops2} we have
\[\sum_{i\in F_{0}}\tilde{d}_{i} = \left(\sum_{i\in F_{0}}d_{i}\right)-\eta_{0} = NA+kB.\]
We wish to apply Theorem \ref{fr3pt} to the sequence $\{\tilde{d}_{i}\}_{i\in F_{0}}$, and this shows that \eqref{fr3pt1} holds. From \eqref{ops1} we see that $\tilde{d}_{i}\leq d_{i}<A$ for all $i\in F_{0}$. From this it is it is clear that \eqref{fr3pt2} also holds. We conclude that there is a positive operator $\tilde{E}_{0}$ with diagonal $\{\tilde{d}_{i}\}_{i\in F_{0}}$, $\sigma(\tilde{E}_{0}) = \{0,A,B\}$, $m_{\tilde{E}_{0}}(B)=k$, $m_{\tilde{E}_{0}}(A)=N$ and $m_{\tilde{E}_{0}}(0) = |F_{0}|-k-N$. Using \eqref{ops0} we have 
\[\sum_{i\in I_{1}\setminus F_{0}}\tilde{d}_{i} = \sum_{i\in I_{1}\setminus F_{0}}d_{i} = C-\sum_{i\in F_{0}}d_{i} = D -\eta_{0}\]
and from \eqref{ops2} we see that
\[\sum_{i\in I_{2}}(B-\tilde{d}_{i}) = D-\eta_{0}.\]
By Theorem \ref{Kadison} there is a projection $Q$ such that $BQ$ has diagonal $\{\tilde{d}_{i}\}_{i\in (I_{1}\setminus F_{0}) \cup I_{2}}$. Since $|I_{1}\setminus F_{0}|=|I_{2}|=\infty$ we have $m_{Q}(1)=m_{Q}(0)=\infty$. Thus, the operator $\tilde{E} = E_{0} \oplus BQ$ has the desired eigenvalues and multiplicities and diagonal $\{\tilde{d}_{i}\}_{i\in I}$. Finally, use the second part of Lemma \ref{ops} to obtain an operator $E$, unitarily equivalent to $\tilde{E}$, with diagonal $\{d_{i}\}_{i\in I}$. This completes the proof of the first case.

{\bf{Case 2:}} Assume $k\leq-N$. We obtain this case by applying Case $1$ to the sequence $\{B-d_{i}\}$, to obtain the operator $E_{0}$ with $\sigma(E_{0})=\{0,B-A,B\}$, $\dim\ker(E_{0})=\dim\ker(B-E_{0})=\infty$ and $\dim\ker((B-A)-E_{0})=N$. The operator $B-E_{0}$ has the desired diagonal, eigenvalues, and multiplicities.

{\bf{Case 3:}} Assume $-N<k<0$ and $C=A(N+k)$. Theorem \ref{Kadison} implies there is a projection $P$ with $N+k$ dimensional range, such that $AP$ has diagonal $\{d_{i}\}_{i\in I_{1}}$. Since $|I_{1}|=\infty$ we also see that $P$ has infinite dimensional kernel.

Next, note that
\[\sum_{i\in I_{2}}(B-d_{i}) = D = C - NA-kB = NA+kA-NA-kB = -k(B-A).\]
Theorem \ref{Kadison} implies that there is a projection $Q$ with $-k$ dimensional range, such that $(B-A)Q$ has diagonal $\{B-d_{i}\}_{i\in I_{2}}$. Since $|I_{2}|=\infty$ we see that $Q$ has infinite dimensional kernel. The operator $E = AP\oplus(BI-(B-A)Q)$ has the desired diagonal, eigenvalues, and multiplicities.

{\bf{Case 4:}} Assume $-N<k<0$ and $C>A(N+k)$. Set $\eta_{0}: = C-(N+k)A<C$.
There is a finite set $F_{0}\subset I_{1}$ such that
\[\sum_{i\in F_{0}}d_{i}>\eta_{0}.\]
Next, note that
\[\eta_{0} = C-(N+k)A = NA+kB+D-NA-kA = D + k(B-A)<D.\]
Thus, there is a finite set $F_{1}\subset I_{2}$ such that
\[\sum_{i\in F_{1}}(B-d_{i})>\eta_{0}.\]
Apply Lemma \ref{ops} (i) with $F_{0}$, $F_{1}$, and $\eta_{0}$ as above, to obtain a sequence $\{\tilde{d}_{i}\}_{i\in I}$. Using \eqref{ops2} we have
\[\sum_{i\in I_{1}}\tilde{d}_{i}= \sum_{i\in I_{1}\setminus F_{0}}d_{i} + \sum_{i\in F_{0}}\tilde{d}_{i} = \sum_{i\in I_{1}\setminus F_{0}}d_{i} + \sum_{i\in F_{0}}d_{i}-\eta_{0} = C-\eta_{0} = (N+k)A\]
and
\[\sum_{i\in I_{2}}(B-\tilde{d}_{i}) = \sum_{i\in I_{2}\setminus F_{1}}(B-d_{i}) + \sum_{i\in F_{1}}(B-d_{i}) - \eta_{0} = D-\eta_{0} = -k(B-A).\]
Thus, the sequence $\{\tilde{d}_{i}\}_{i\in I}$ satisfies the conditions of Case 3, so there is an operator $\tilde{E}$ with the desired eigenvalues and multiplicities but with diagonal $\{\tilde{d}_{i}\}_{i\in I}$. The second part of Lemma \ref{ops} implies there is an operator $E$, unitarily equivalent to $\tilde{E}$, but with diagonal $\{d_{i}\}_{i\in I}$. This completes the final case.
\end{proof}

We are now in a position to prove Theorem \ref{3pt}. In fact we will prove the following more general theorem.

\begin{thm}\label{3ptg} Let $0<A<B<\infty$ and let $\{d_{i}\}_{i\in I}$ be a sequence in $[0,B]$. If there is a positive operator $E$ with diagonal $\{d_{i}\}_{i\in I}$ and $\sigma(E) = \{0,A,B\}$ then one following holds:
\begin{enumerate}
\item $C=\infty$, 
\item $D=\infty$,
\item $C,D<\infty$ and there exist $N\in\N$ and $k\in\Z$ such that \eqref{3pttrace} and \eqref{3ptmaj} hold.
\end{enumerate}
Conversely, if $\sum d_{i} = \sum (B-d_{i}) = \infty$ and one of (i),(ii), or (iii) holds, then there is a positive operator $E$ with diagonal $\{d_{i}\}_{i\in I}$ and $\sigma(E) = \{0,A,B\}$.
\end{thm}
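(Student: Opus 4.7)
The claim is essentially an assembly of the three main results that have already been established in the excerpt, so the plan is to deduce it directly from Theorem \ref{nec4}, Theorem \ref{suff2}, and Theorem \ref{N<infty}.

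For necessity, I would argue as follows. Suppose $E$ is a self-adjoint operator with diagonal $\{d_i\}_{i\in I}$ and $\sigma(E)=\{0,A,B\}$. If $C=\infty$ then (i) holds and we are done; if $D=\infty$ then (ii) holds. Otherwise both $C,D<\infty$, and then Theorem \ref{nec4} applies: it produces $N:=m_E(A)<\infty$ and an integer $k\in\Z$ satisfying
\[C-D=NA+kB\quad\text{and}\quad C\geq(N+k)A,\]
which is precisely \eqref{3pttrace} and \eqref{3ptmaj}. Hence (iii) holds.

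For sufficiency, assume in addition that $\sum d_i=\sum(B-d_i)=\infty$, and treat the three cases. If (i) holds, i.e.\ $C=\infty$, then in particular
\[\sum_{d_i<A}d_i+\sum_{d_i\geq A}(B-d_i)=\infty,\]
so I would invoke Theorem \ref{suff2} with $\Lambda=\{0,A,B\}$, $n_0=n_B=\infty$, and any choice of $n_A\in\N\cup\{\infty\}$ to produce a self-adjoint operator with diagonal $\{d_i\}_{i\in I}$ and spectrum $\{0,A,B\}$. Case (ii) is symmetric: either apply the same argument noting $D=\infty$ forces the same divergence condition, or apply case (i) to the sequence $\{B-d_i\}_{i\in I}$ (which has $C$ and $D$ swapped) and then take $B$ minus the resulting operator. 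In case (iii) we have $C,D<\infty$, together with $\sum d_i=\sum(B-d_i)=\infty$ and the relations \eqref{3pttrace}, \eqref{3ptmaj}. These are exactly the hypotheses of part (ii) of Theorem \ref{N<infty}, which then directly yields an operator $E$ with $\sigma(E)=\{0,A,B\}$, $m_E(0)=m_E(B)=\infty$, $m_E(A)=N$, and diagonal $\{d_i\}_{i\in I}$. In particular $\sigma(E)=\{0,A,B\}$, as required.

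There is no real obstacle here; the only care needed is in case (iii), where one must observe that the $N$ and $k$ supplied by the hypothesis \eqref{3pttrace}--\eqref{3ptmaj} can be fed directly into Theorem \ref{N<infty}(ii) and that the summability condition $\sum d_i=\sum(B-d_i)=\infty$ is exactly what Theorem \ref{N<infty}(ii) also demands. Thus Theorem \ref{3ptg} follows at once, and Theorem \ref{3pt} is the special case in which $\sum d_i=\sum(B-d_i)=\infty$ is included in the hypotheses.
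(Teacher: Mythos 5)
Your proof is correct and takes essentially the same route as the paper: necessity via Theorem \ref{nec4} when $C,D<\infty$, and sufficiency via Theorem \ref{suff2} in cases (i)/(ii) and Theorem \ref{N<infty}(ii) in case (iii). Your citation of Theorem \ref{nec4} for the necessity of (iii) is actually slightly more precise than the paper's (which points to Theorem \ref{N<infty}, whose own necessity proof in turn invokes Theorem \ref{nec4}), but the argument is the same.
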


\begin{proof} First, assume that $E$ is a positive operator with spectrum $\{0,A,B\}$ and diagonal $\{d_{i}\}$. If either $C=\infty$ or $D=\infty$ then we are done since this is exactly (i) or (ii). If $C,D<\infty$ then Theorem \ref{N<infty} shows that \eqref{3pttrace} and \eqref{3ptmaj} hold and thus (iii) holds. 

Next, assume $\{d_{i}\}$ is a sequence in $[0,B]$. If (i) or (ii) holds then Theorem \ref{suff2} shows that there is a positive operator $E$ with spectrum $\{0,A,B\}$ and diagonal $\{d_{i}\}$. Finally, if (iii) holds and $\sum d_{i} = \sum (B-d_{i})=\infty$ then Theorem \ref{N<infty} shows that there is a positive operator $E$ with spectrum $\{0,A,B\}$ and diagonal $\{d_{i}\}$.
\end{proof}

\begin{remark}\label{rmk1}
In Theorem \ref{3pt} (and Theorem \ref{3ptg}) the assumption that $\sum d_{i} = \sum (B-d_{i})=\infty$ is necessary. Consider the sequence $\{A,0,0,\ldots\}$. This is clearly not the diagonal of any operator with spectrum $\{0,A,B\}$ since the operator would be trace class with trace equal to $A$, and thus $B>A$ cannot be an eigenvalue. However, we have $C = 0$ and $D=B-A$ so that \eqref{3pttrace} and \eqref{3ptmaj} hold with $N=1$ and $k=-1$.
\end{remark}

\begin{remark}\label{rmk2} 
There exist two non-unitarily equivalent operators with three point spectrum and the same diagonal. Let $0<A<B$ and let $I_{n}$ be the identity operator of an $n$ dimensional Hilbert space. From Theorem \ref{Kadison}, there is a projection $P$ with infinite dimensional kernel and range such that the diagonal of $BP$ consists of a countable infinite sequence of $A$'s. The operator $BP\oplus AI_{n}$ has a diagonal consisting of a countable number of $A$'s, however the multiplicity of the eigenvalue $A$ is $n$.
\end{remark}

\section{Outer eigenvalue with finite multiplicity}

In the last two remaining cases ((e) and (f)) of Theorem \ref{fullthm} we consider operators with finite dimensional kernel. In these cases, where there is an ``outer'' eigenvalue with finite multiplicity, we have the following necessary condition.

\begin{thm}\label{nec6} Let $0<A<B<\infty$ and let $E$ be a positive operator on a Hilbert space $\Hil$ with $\sigma(E)=\{0,A,B\}$ and $m_{E}(0)<\infty$. Let $\{e_{i}\}_{i\in I}$ be an orthonormal basis for $\Hil$ and set $d_{i}=\langle Ee_{i},e_{i}\rangle$. We have
\begin{equation}\label{nec6.1}\sum_{d_{i}<A}(A-d_{i})\leq A m_{E}(0).\end{equation}
\end{thm}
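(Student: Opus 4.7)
The plan is to exploit the spectral decomposition of $E$ in the natural way, as in the proof of Theorem \ref{nec4}. Write $P$, $Q$, $R$ for the orthogonal projections onto $\ker E$, $\ker(E-A)$, and $\ker(E-B)$ respectively. Since $\sigma(E)=\{0,A,B\}$, these are pairwise orthogonal with $P+Q+R = I$ and $E = AQ + BR$.

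Define $p_i = \langle Pe_i,e_i\rangle$, $q_i = \langle Qe_i,e_i\rangle$, $r_i = \langle Re_i,e_i\rangle$, so that $p_i + q_i + r_i = 1$ and $d_i = Aq_i + Br_i$ for every $i\in I$. The key pointwise estimate is: for each $i$ with $d_i<A$,
\[
A - d_i = A(1-q_i) - Br_i = A(p_i + r_i) - Br_i = Ap_i - (B-A)r_i \leq Ap_i,
\]
since $B-A>0$ and $r_i\geq 0$.

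Summing over $\{i : d_i<A\}$ and then dominating by the sum over all of $I$ gives
\[
\sum_{d_i<A}(A-d_i) \leq \sum_{d_i<A} Ap_i \leq A\sum_{i\in I} p_i = A\,\tr(P) = A\,m_E(0),
\]
which is \eqref{nec6.1}. (If $m_E(0)=\infty$ the inequality is trivial; otherwise $\tr(P)=\dim\ker E = m_E(0)$ is well defined regardless of the choice of orthonormal basis.) There is no real obstacle here — the only subtlety is just to notice that replacing $A-d_i$ by the slightly larger quantity $Ap_i$ decouples the inequality from the other spectral data, turning the sum into a partial trace of $P$.
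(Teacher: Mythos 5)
Your proof is correct and takes essentially the same approach as the paper: both use the spectral decomposition to write $d_i = A q_i + B r_i$ with $q_i, r_i$ the diagonal entries of the spectral projections, bound $A - d_i \leq A p_i$ pointwise (the paper phrases this as $B\langle Qe_i,e_i\rangle \geq A\langle Qe_i,e_i\rangle$, which is algebraically the same inequality as your $Ap_i - (B-A)r_i \leq Ap_i$), and then dominate the sum over $\{i : d_i < A\}$ by the full trace of the kernel projection. Your remark handling the $m_E(0)=\infty$ case is a small but welcome tidying, since the paper's proof asserts that the kernel projection is finite rank without comment.
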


\begin{proof} There exist mutually orthogonal projections $P$ and $Q$ such that $E=AP+BQ$. Note that $I-P-Q$ is a finite rank projection and thus has finite trace equal to $m_{E}(0)$. Set $J_{1} = \{i\in I:d_{i}<A\}$. Then
\begin{align*}
\sum_{i\in J_{1}}(A-d_{i}) & = \sum_{i\in J_{1}}\big(A - A\langle Pe_{i},e_{i}\rangle - B\langle Qe_{i},e_{i}\rangle\big) \leq \sum_{i\in J_{1}}\big(A - A\langle Pe_{i},e_{i}\rangle - A\langle Qe_{i},e_{i}\rangle\big)\\
 & = A\left(\sum_{i\in J_{1}}\big(1 - \langle Pe_{i},e_{i}\rangle - \langle Qe_{i},e_{i}\rangle\big)\right) \leq A\left(\sum_{i\in I}\big(1 - \langle Pe_{i},e_{i}\rangle - \langle Qe_{i},e_{i}\rangle\big)\right)\\
 & = Am_{E}(0).
\end{align*}
\end{proof}

Next, we look at two examples which demonstrate that for operators with finite dimensional kernel the constants $C$ and $D$ do not capture enough information about a sequence in order to tell if it is the diagonal of an operator of the specified type.

\begin{ex}
Consider the sequence $\{d_{i}\}$ consisting of $\{1-i^{-1}\}_{i=1}^{\infty}$ and a countable infinite number of $2$'s. If $A=1$ and $B=2$ then we have $C=\infty$ and $D=0$. By Theorem \ref{nec6} this is not the diagonal of any positive operator $E$ with $\sigma(E)=\{0,1,2\}$ and finite dimensional kernel, since
\[\sum_{d_{i}<A}(A-d_{i}) = \sum_{i=1}^{\infty}\frac{1}{i}=\infty.\]
\end{ex}

\begin{ex}
Consider the sequence $\{c_{i}\}$ consisting of $\{1-2^{-i}\}_{i=1}^{\infty}$ and a countable infinite number of $2$'s. If $A=1$ and $B=2$ then we have $C=\infty$ and $D=0$.
By Theorem \ref{Kadison} there is a projection $P$ with diagonal $\{1-2^{-i}\}_{i=1}^{\infty}$ and finite dimensional kernel. Let $I$ be the identity operator on an infinite dimensional Hilbert space and set $E=P\oplus 2I$. This operator has diagonal $\{c_{i}\}$, spectrum $\{0,1,2\}$ and finite dimensional kernel. Note that $\{c_{i}\}$ and $\{d_{i}\}$ have the same values for $C$ and $D$, but only $\{c_{i}\}$ is the diagonal of an operator with spectrum $\{0,1,2\}$ and finite dimensional kernel.
\end{ex}

Instead of $C$ and $D$ we will use the following terminology from Theorem \ref{fullthm} in the rest of the section: 
\[
J_{1} = \{i:d_{i}<A\},\ J_{2} = \left\{i:d_{i}\in\left[A,\frac{A+B}{2}\right)\right\},\ J_{3} = \left\{i:d_{i}\geq \frac{A+B}{2}\right\}
\]

\[C_{1} = \sum_{i\in J_{1}}(A-d_{i}),\ C_{2} = \sum_{i\in J_{2}}(d_{i}-A),\ C_{3} = \sum_{i\in J_{3}}(B-d_{i})\]
Note that for symmetry we will use the notation $J_{1}$ instead of $I_{1}$, though they denote the same set.

The next theorem shows the necessity of the conditions in part (e) of Theorem \ref{fullthm}.

\begin{thm}\label{nec7} Let $0<A<B<\infty$ and let $E$ be a positive operator on a Hilbert space $\Hil$ with $\sigma(E)=\{0,A,B\}$, $m_{E}(0)<\infty$, and $m_{E}(A)=m_{E}(B)=\infty$. Let $\{e_{i}\}_{i\in I}$ be an orthonormal basis for $\Hil$ and set $d_{i} = \langle Ee_{i},e_{i}\rangle$.  If $C_{2},C_{3}<\infty$, then $C_{1}<\infty$, $|J_{1}\cup J_{2}|=|J_{3}|=\infty$, and there exist $n,k\in\Z$ such that $n+k=m_{E}(0)$,
\begin{equation}\label{nec7.1}C_{1}-C_{2}+C_{3} = nA + kB,\end{equation}
and
\begin{equation}\label{nec7.2}C_{1}\leq A(n+k).\end{equation}

\end{thm}

\begin{proof} There exist mutually orthogonal projections $P$ and $Q$ such that $E = AP+BQ$. Define $p_{i} = \langle Pe_{i},e_{i}\rangle$ and $q_{i} = \langle Qe_{i},e_{i}\rangle$ for each $i\in I$. Since $m_{E}(0)<\infty$, Theorem \ref{nec6} implies that $C_{1}\leq Am_{E}(0)<\infty$. Next, we note that
\begin{equation}\label{nec7.3}\sum_{i\in I}(1-p_{i}-q_{i}) = m_{P+Q}(0) = m_{E}(0) <\infty.\end{equation}
Using \eqref{nec7.3} we have
\begin{align*}
\sum_{i\in J_{1}\cup J_{2}}q_{i} & = \frac{1}{B-A}\left(\sum_{i\in J_{1}\cup J_{2}}(A-Ap_{i}-Aq_{i}) - \sum_{i\in J_{1}}(A-Ap_{i}-Bq_{i}) + \sum_{i\in J_{2}}(Bq_{i} + Ap_{i} - A)\right)\\
 & = \frac{1}{B-A}\left(\sum_{i\in J_{1}\cup J_{2}}(A-Ap_{i}-Aq_{i}) - C_{1} + C_{2}\right) \leq \frac{Am_{E}(0) - C_{1}+C_{2}}{B-A}<\infty.
\end{align*}
Together with \eqref{nec7.3} this also shows that $\sum_{i\in J_{1}\cup J_{2}}(1-p_{i})<\infty$. A similar calculation shows that
\[\sum_{i\in J_{3}}(1-q_{i}),\sum_{i\in J_{3}}p_{i}<\infty.\]
By Theorem \ref{Kadison} there exist $n,k\in\Z$ such that
\begin{equation}\label{eq2}\begin{split}
n = \sum_{i\in J_{1}\cup J_{2}}(1-p_{i}) - \sum_{i\in J_{3}}p_{i}\\
k = \sum_{i\in J_{3}}(1-q_{i}) - \sum_{i\in J_{1}\cup J_{2}} q_{i}.\\
\end{split}\end{equation}
Now, we calculate
\begin{align*}
C_{1}-C_{2}+C_{3} &  = \sum_{i\in J_{1}}(A-Ap_{i}-Bq_{i}) - \sum_{i\in J_{2}}(Ap_{i}+Bq_{i}-A) + \sum_{i\in J_{3}}(B-Ap_{i}-Bq_{i})\\
 & = A\sum_{i\in J_{1}\cup J_{2}}(1-p_{i}) - A\sum_{i\in J_{3}}p_{i} + B\sum_{i\in J_{3}}(1-q_{i}) - B\sum_{i\in J_{1}\cup J_{2}} q_{i}\\
 & = nA + kB,
\end{align*}
which shows \eqref{nec7.1} holds.

From \eqref{eq2} we have
\[n+k = \sum_{i\in I}(1-p_{i}-q_{i}) = m_{E}(0).\]
Theorem \ref{nec6} shows $C_{1}\leq Am_{E}(0)=A(n+k)$, which is \eqref{nec7.2}.

Note that
\[\sum_{i\in I}p_{i} = \dim\ran P = m_{E}(A) = \infty.\]
Since $\sum_{i\in J_{3}}p_{i}<\infty$, it must be the case that $\sum_{i\in J_{1}\cup J_{2}}p_{i}=\infty$ and thus $|J_{1}\cup J_{2}|=\infty$. Similarly, since $Q$ has infinite dimensional range, we have $\sum_{i\in I}q_{i}=\infty$. Since $\sum_{i\in J_{1}\cup J_{2}}q_{i}<\infty$ it must be the case that $\sum_{i\in J_{3}}q_{i} = \infty$, and thus $|J_{3}|=\infty$.
\end{proof}

The next theorem shows that the conditions in part (e) of Theorem \ref{fullthm} are sufficient to construct the desired operator. We state it in a slightly more general form for use in the proof of part (f) later in this section.

\begin{thm}\label{suff4} Let $0<A<B<\infty$, let $\{d_{i}\}_{i\in I}$ be a sequence in $[0,B]$, and let $Z\in\N$. If $|J_{1}\cup J_{2}|=\infty$,
$C_{1}\leq AZ$, and either of the following holds:
\begin{enumerate}
\item $C_{2}+C_{3}=\infty$
\item $C_{2},C_{3}<\infty$ and there exists $n,k\in\Z$ such that $Z=n+k$ and
\begin{equation}\label{trace}C_{1}-C_{2}+C_{3}=nA+kB,\end{equation}
\end{enumerate}
then there is a positive operator $E$ with $\sigma(E)=\{0,A,B\}$, $m_{E}(0)=Z$, $m_{E}(A)=\infty$, and diagonal $\{d_{i}\}$. Moreover, if (i) holds then $m_{E}(B)=\infty$, and if (ii) holds then $m_{E}(B)=|J_{3}|-k$.
\end{thm}

\begin{proof} Set
\[\eta = AZ - C_{1}.\]

{\bf{Case 1:}} Assume
\[\sum_{i\in J_{1}}d_{i},\sum_{i\in J_{2}\cup J_{3}}(B-d_{i})>\eta.\]
There are finite subsets $F_{0}\subset J_{1}$ and $F_{1}\subset J_{2}\cup J_{3}$ such that
\[\eta \leq \min\left\{\sum_{i\in F_{0}}d_{i},\sum_{i\in F_{1}}(B-d_{i})\right\}.\]
We can apply Lemma \ref{ops} (i) with $F_{0}$ and $F_{1}$ as above, and $\eta_{0} = \eta$, to obtain $\{\tilde{d}_{i}\}_{i\in I}$. From \eqref{ops2} we have
\begin{align*}
\sum_{i\in J_{1}}(A-\tilde{d}_{i}) & = \sum_{i\in F_{0}}(A-\tilde{d}_{i}) + \sum_{i\in J_{1}\setminus F_{0}}(A-d_{i}) = |F_{0}|A - \sum_{i\in F_{0}}\tilde{d}_{i} + \sum_{i\in J_{1}\setminus F_{0}}(A-d_{i})\\
 & = |F_{0}|A + \eta - \sum_{i\in F_{0}}d_{i} + \sum_{i\in J_{1}\setminus F_{0}}(A-d_{i}) = \eta + \sum_{i\in J_{1}}(A-d_{i}) = \eta + C_{1} = AZ.
\end{align*}
Theorem \ref{Kadison} implies there is a projection $P$ with $Z$ dimensional kernel such that $AP$ has diagonal $\{\tilde{d}_{i}\}_{i\in J_{1}}$. It is clear that if $|J_{1}|=\infty$ then $m_{P}(1)=\infty$.

If (i) holds, that is $C_{2}+C_{3}=\infty$, then Theorem \ref{Kadison} implies there is a projection $Q_{1}$ such that $(B-A)Q_{1}$ has diagonal $\{\tilde{d}_{i}-A\}_{i\in J_{2}\cup J_{3}}$. Since
\[\sum_{i\in J_{2}\cup J_{3}}(\tilde{d}_{i}-A) = \sum_{i\in J_{2}\cup J_{3}}\big((B-A) - (\tilde{d}_{i}-A)\big) = \infty,\]
we also see that $m_{Q_{1}}(0)=m_{Q_{1}}(1)=\infty$. Set $\tilde{E} = AP\oplus\big((B-A)Q_{1}+AI\big)$. It is clear that $m_{\tilde{E}}(0)=Z$, $\sigma(\tilde{E}) = \{0,A,B\}$, and $m_{\tilde{E}}(A) = m_{\tilde{E}}(B)=\infty$. By the second part of Lemma \ref{ops} there is an operator $E$, unitarily equivalent to $\tilde{E}$, with diagonal $\{d_{i}\}_{i\in I}$.

If (ii) holds, then using \eqref{trace} we have
\begin{align*}
\sum_{i\in J_{2}}(\tilde{d}_{i} - A) - \sum_{i\in J_{3}}(B-\tilde{d}_{i}) & = \eta + \sum_{i\in J_{2}}(d_{i} - A) - \sum_{i\in J_{3}}(B-d_{i}) = \eta + C_{2}-C_{3}\\
 & = AZ - C_{1} + C_{1} - An - Bk = -k(B-A).
\end{align*}
Theorem \ref{Kadison} implies there is a projection $Q_{2}$ such that $(B-A)Q_{2}$ has diagonal $\{\tilde{d}_{i}-A\}_{i\in J_{2}\cup J_{3}}$. The operator $\tilde{E} = AP\oplus\big((B-A)Q_{2}+AI\big)$ has diagonal $\{\tilde{d}_{i}\}_{i\in I}$, and it is clear that $m_{\tilde{E}}(0) = Z$ and $\sigma(\tilde{E}) = \{0,A,B\}$. Note that if $|J_{2}|=\infty$ then $m_{Q_{2}}(0) = \infty$ and we already noted that $|J_{1}|=\infty$ implies $m_{P}(1)=\infty$; in either case $m_{\tilde{E}}(A) = \infty$. If $|J_{3}|=\infty$ we have $m_{Q_{2}}(1)=\infty$ and thus $m_{\tilde{E}}(B) = \infty$. If $|J_{3}|<\infty$ then we have
\[\sum_{i\in J_{2}\cup J_{3}}(\tilde{d}_{i}-A) = \sum_{i\in J_{2}}(\tilde{d}_{i} - A) - \sum_{i\in J_{3}}(B-\tilde{d}_{i}) +|J_{3}|(B-A) = (|J_{3}|-k)(B-A),\]
which implies $m_{Q_{2}}(1) = |J_{3}|-k$, and thus $m_{\tilde{E}}(B) = |J_{3}|-k$. By Lemma \ref{ops} (ii), there is an operator $E$, unitarily equivalent to $\tilde{E}$, with diagonal $\{d_{i}\}$. This completes the proof of Case 1.

{\bf{Case 2:}} Assume
\[\sum_{i\in J_{1}}d_{i}\leq\eta.\]
This implies $J_{1}$ is a finite set and that $|J_{1}|\leq Z$. Since $|J_{1}\cup J_{2}|=\infty$ this implies $|J_{2}|=\infty$, and thus
\[\sum_{i\in J_{2}}(B-d_{i})=\infty.\]
Let $L,F_{1}\subset J_{2}\cup J_{3}$ be disjoint finite sets which satisfy three conditions:
\[\sum_{i\in F_{1}}(B-d_{i})>BZ,\]
$|L|=Z-|J_{1}|$, and $\max\{d_{i}\}_{i\in L}\leq\min\{d_{i}\}_{i\in F_{1}}$. Set $F_{0} = J_{1}\cup L$. Apply Lemma \ref{ops} (i) with $F_{0}$ and $F_{1}$ as already defined, and
\[\eta_{0} = \sum_{i\in F_{0}}d_{i}<BZ\]
to obtain the sequence $\{\tilde{d}_{i}\}_{i\in I}$. The choice of $\eta_{0}$ implies that $\{\tilde{d}_{i}\}_{i\in F_{0}}$ is a sequence of $Z$ zeroes.

If (i) holds, then we have
\[\sum_{i\in J_{2}}(\tilde{d}_{i}-A) + \sum_{i\in J_{3}}(B-\tilde{d}_{i}) = \infty.\]
Theorem \ref{Kadison} implies that there is a projection $Q_{1}$ such that $(B-A)Q_{1}$ has diagonal $\{\tilde{d}_{i}-A\}_{i\in J_{2}\cup J_{3}}$ and $m_{Q_{1}}(0)=m_{Q_{1}}(1)=\infty$. Let ${\bf 0}_{Z}$ be the zero operator on a $Z$ dimensional Hilbert space, and set $\tilde{E} = {\bf 0}_{Z}\oplus\big((B-A)Q_{1}+AI\big)$. It is clear that $\tilde{E}$ has diagonal $\{\tilde{d}_{i}\}$, $m_{\tilde{E}}(0)=Z$, $\sigma(\tilde{E}) = \{0,A,B\}$, and $m_{\tilde{E}}(A) = m_{\tilde{E}}(B)=\infty$. By Lemma \ref{ops} (ii), there is an operator $E$, unitarily equivalent to $\tilde{E}$, with diagonal $\{d_{i}\}_{i\in I}$.

If (ii) holds then by \eqref{trace} we have
\begin{align*}
\sum_{i\in J_{2}\setminus L}(\tilde{d}_{i} - A) - \sum_{i\in J_{3}}(B-\tilde{d}_{i}) & = \eta_{0} + \sum_{i\in J_{2}\setminus L}(d_{i} - A) - \sum_{i\in J_{3}}(B-d_{i})\\
 & = \sum_{i\in J_{1}}d_{i} + \sum_{i\in L}d_{i} + \sum_{i\in J_{2}\setminus L}(d_{i} - A) - C_{3}\\
 & = -C_{1} + C_{2} - C_{3} + (|J_{1}|+|L|)A\\
 & = -nA-kB + ZA = -k(B-A).
\end{align*}
Theorem \ref{Kadison} implies there is a projection $Q_{2}$ such that $(B-A)Q_{2}$ has diagonal $\{\tilde{d}_{i}-A\}_{i\in (J_{2}\cup J_{3})\setminus L}$. Since $J_{2}$ is infinite we have $m_{Q_{2}}(0)=\infty$. If $J_{3}$ is infinite then we also have $m_{Q_{2}}(1)=\infty$. If $|J_{3}|<\infty$ then
\[\sum_{i\in(J_{2}\cup J_{3})\setminus L}(\tilde{d}_{i}-A) = \sum_{i\in J_{2}\setminus L}(\tilde{d}_{i} - A) - \sum_{i\in J_{3}}(B-\tilde{d}_{i}) +|J_{3}|(B-A) = (|J_{3}|-k)(B-A),\]
which implies $m_{Q_{2}}(1) = |J_{3}|-k$. The operator $\tilde{E} = {\mathbf 0}_{Z}\oplus\big((B-A)Q_{2}+AI\big)$ has the desired eigenvalues and multiplicities and diagonal $\{\tilde{d}_{i}\}$. Lemma \ref{ops} (ii) implies there is an operator $E$, unitarily equivalent to $\tilde{E}$, with diagonal $\{d_{i}\}$. This completes the proof of the second case.

{\bf Case 3:} Assume
\[\sum_{i\in J_{2}\cup J_{3}}(B-d_{i})\leq \eta.\]
This implies $J_{2}$ is finite, since $d_{i}<(B+A)/2$ for all $i\in J_{2}$. By hypothesis $|J_{1}\cup J_{2}|=\infty$, and thus  $J_{1}$ must be infinite. Moreover, $A$ is a limit point of $\{d_{i}\}_{i\in J_{1}}$, since $\sum_{i\in J_{1}}(A-d_{i})<\infty$ and $d_{i}<A$ for all $i\in J_{1}$. There is some $N_{0}\in\N$ such that
\[(B-A)N_{0}>\eta.\]
Choose $\alpha\in(0,A)$ such that
\[\sum_{d_{i}<\alpha}d_{i}>AN_{0}.\]
Set $F_{0} = \{i\in J_{1}:d_{i}<\alpha\}$, and note that it is finite since $C_{1}<\infty$. Since $A$ is a limit point of $\{d_{i}\}_{i\in J_{1}}$, we can find a set $F_{1}\subset \{i\in J_{1}:d_{i}\geq\alpha\}$ with $N_{0}$ elements, and clearly 
\[\sum_{i\in F_{1}}(A-d_{i})<AN_{0}.\]
Applying Lemma \ref{ops} (i) on the interval $[0,A]$, with $F_{0}$ and $F_{1}$ as above, and
\[\eta_{0} = \sum_{i\in F_{1}}(A-d_{i}),\]
we obtain a sequence $\{\tilde{d}_{i}\}_{i\in I}$. Using \eqref{ops2} we see that $\tilde{d}_{i}=A$ for each $i\in F_{1}$. We also have
\[\sum_{i\in F_{0}}(A-\tilde{d}_{i}) = |F_{0}|A - \sum_{i\in F_{0}}\tilde{d}_{i} = |J_{1}|A - \sum_{i\in F_{0}}d_{i} - \sum_{i\in F_{1}}(A-d_{i}) = \sum_{i\in F_{0}\cup F_{1}}(A-d_{i}).\]
Define the sets
\[
\tilde{J}_{1} = \{i:\tilde{d}_{i}<A\},\ \tilde{J}_{2} = \left\{i:\tilde{d}_{i}\in\left[A,\frac{A+B}{2}\right)\right\},\ \tilde{J}_{3} = \left\{i:\tilde{d}_{i}\geq \frac{A+B}{2}\right\}.
\]
We have
\begin{align*}
\sum_{i\in\tilde{J}_{1}}(A-\tilde{d}_{i}) & = \sum_{i\in J_{1}\setminus(F_{0}\cup F_{1})}(A-d_{i}) + \sum_{i\in F_{0}}(A-\tilde{d}_{i}) = \sum_{i\in J_{1}\setminus(F_{0}\cup F_{1})}(A-d_{i}) + \sum_{i\in F_{0}\cup F_{1}}(A-d_{i})\\
 & = \sum_{i\in J_{1}}(A-d_{i}) = C_{1}.
\end{align*}
Since $\tilde{d}_{i}=A$ for all $i\in F_{1}$ we have
\[\sum_{i\in\tilde{J}_{2}}(\tilde{d}_{i}-A) = \sum_{i\in J_{2}}(d_{i}-A) + \sum_{i\in F_{1}}(\tilde{d}_{i}-A) = \sum_{i\in J_{2}}(d_{i}-A) = C_{2}.\]
Lastly, $\tilde{d}_{i}=d_{i}$ for all $i\in J_{3}$, and thus
\[\sum_{i\in\tilde{J}_{3}}(B-\tilde{d}_{i}) = C_{3}.\]
However,
\[\sum_{i\in \tilde{J}_{2}\cup\tilde{J}_{3}}(B-\tilde{d}_{i}) = \sum_{i\in J_{2}\cup J_{3}}(B-d_{i}) + (B-A)N_{0}>\eta.\]
This implies that $\{\tilde{d}_{i}\}_{i\in I}$ satisfies the conditions of Case 1, and thus there is an operator $\tilde{E}$ with the desired eigenvalues and multiplicities and diagonal $\{\tilde{d}_{i}\}_{i\in I}$. By Lemma \ref{ops} (ii), there is an operator $E$, unitarily equivalent to $\tilde{E}$, with diagonal $\{d_{i}\}_{i\in I}$. This completes the proof of this case and the proof of the theorem.
\end{proof}

As a corollary of Theorems \ref{nec7} and \ref{suff4} we deduce part (f) of Theorem \ref{fullthm}. This will complete the proof of Theorem \ref{fullthm}.

\begin{cor}\label{part(f)} Let $0<A<B<\infty$, let $\{d_{i}\}_{i\in I}$ be a sequence in $[0,B]$, and let $Z,K\in\N$. There exists a positive operator $E$ with $\sigma(E) = \{0,A,B\}$, $m_{E}(0)=Z$, $m_{E}(A) = \infty$, $m_{E}(B) = K$ and diagonal $\{d_{i}\}$ if and only if $|I|=\infty$, $C_{1}\leq ZA$ and
\begin{equation}\label{trace4}\sum_{i\in I}(d_{i}-A) = K(B-A) - ZA.\end{equation}
\end{cor}

\begin{proof} First, assume that $|I|=\infty$, $C_{1}\leq ZA$ and \eqref{trace4} holds. It is clear that $|J_{3}|<\infty$ and thus $|J_{1}\cup J_{2}| = |I\setminus J_{3}|=\infty$. We have
\[C_{1}-C_{2}+C_{3} = - \sum_{i\in I}(d_{i}-A) + |J_{3}|(B-A) = (Z+K-|J_{3}|)A + (|J_{3}|-K)B.\]
By Theorem \ref{suff4} the desired operator exists.

Next, assume the operator $E$ exists. Note that $E-A$ is a finite rank operator, and thus it is of trace class with trace
\[\sum_{i\in I}(d_{i}-A) = K(B-A) - AZ.\]
By Theorem \ref{nec6} we have $C_{1}\leq ZA$. Since $m_{E}(A)=\infty$, the operator $E$ is acting on an infinite dimensional Hilbert space, thus $|I|=\infty$.
\end{proof}

\section{Examples}

To demonstrate the use of Theorem \ref{3pt} we will consider the following problem: Given a sequence $\{d_{i}\}$ in $[0,1]$, for what values of $A$ is there a positive operator $E$ with $\sigma(E) = \{0,A,1\}$ and diagonal $\{d_{i}\}$? First, we will prove the following general theorem.

\begin{thm}\label{exthm} Let $\{d_{i}\}_{i\in\N}$ be a sequence in $[0,1]$ and set
\[\mathcal{A} = \big\{A\in(0,1):\exists\,E\geq 0\text{ with } \sigma(E)=\{0,A,1\}\text{ and diagonal }\{d_{i}\}\big\}.\]
Either $\mathcal{A} = (0,1)$ or $\mathcal{A}$ is a finite (possibly empty) set.
\end{thm}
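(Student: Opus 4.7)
The plan is to feed the characterization of $\mathcal{A}$ given by Theorem~\ref{3pt} (and its refinement Theorem~\ref{3ptg}) back into the problem, turning $A \in \mathcal{A}$ into explicit conditions on $C(A) = \sum_{d_i < A} d_i$, $D(A) = \sum_{d_i \geq A}(1-d_i)$, and $f(A) = C(A) - D(A)$. The first dichotomy will be whether condition (i) or (ii) of Theorem~\ref{3pt} holds at some single $A_0 \in (0,1)$: if $C(A_0) = \infty$, I would argue by monotonicity that $C(A) = \infty$ for all $A > A_0$, while for $A < A_0$ either $C(A) = \infty$ or the infinitely many $d_i \in [A, A_0)$ force $D(A) = \infty$; a symmetric argument handles $D(A_0) = \infty$. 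The same scenario also forces $\sum d_i = \sum(1 - d_i) = \infty$, so the sufficient direction of Theorem~\ref{3ptg} gives $\mathcal{A} = (0,1)$.

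In the complementary scenario $C(A), D(A) < \infty$ for every $A \in (0,1)$, the goal will be to show $\mathcal{A}$ is finite. If $\sum d_i < \infty$, any admissible $E$ is trace class with $\sum d_i = NA + K$ and $N, K \geq 1$ integers; combining this with the Schur--Horn bound $A \geq d_{K+1}^\downarrow$ from Theorem~\ref{frsh} caps $N$ for each of the finitely many valid $K \leq \sum d_i$, yielding finitely many $A$. The case $\sum(1-d_i) < \infty$ is handled symmetrically via $1 - E$. It remains to treat the main sub-case $\sum d_i = \sum(1-d_i) = \infty$ with $C, D$ finite everywhere, which I would handle by contradiction.

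Suppose $\mathcal{A}$ is infinite in this sub-case; extract distinct $A_n \in \mathcal{A}$ with $A_n \to A_* \in [0,1]$. Theorem~\ref{3ptg}(iii) furnishes integers $N_n \geq 1$ and $k_n$ with $f(A_n) = N_n A_n + k_n$, and rearranging the majorization gives the bound $N_n A_n \leq C(A_n) + A_n D(A_n)/(1-A_n)$. For $A_* \in (0,1)$, I would use that $\{d_i\}$ cannot accumulate at any point of $(0,1)$ (else $C$ or $D$ would be infinite on one side), so some neighborhood of $A_*$ contains only finitely many distinct $d_i$ values; passing to a sub-sub-sequence inside an interval of constancy of $f$ and $D$ yields $N_n$ and $k_n$ bounded, producing only finitely many possible $A_n$ and contradicting distinctness.

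The endpoint case $A_* = 0$ (with $A_* = 1$ handled symmetrically by applying the argument to $\{1 - d_i\}$) is the crux. I would first record two facts about this sub-case: $\sum_i d_i(1-d_i) < \infty$, obtained by splitting at $1/2$ and bounding by $\sum_{d_i \leq 1/2} d_i + \sum_{d_i > 1/2}(1-d_i)$ (both finite since $C$ is finite just above $1/2$ and $D(1/2) < \infty$); and $C(A) \to 0$ as $A \to 0^+$, by a Cauchy-tail argument on the convergent series defining $C(\epsilon)$. Writing $A_n D(A_n) = \sum_{d_i \geq A_n} A_n(1-d_i)$, dominated convergence against the summable majorant $d_i(1-d_i)$ gives $A_n D(A_n) \to 0$, and then the bound forces $N_n A_n \to 0$. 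The punchline, which is the main technical obstacle to set up cleanly, is a fractional-part argument: since $f$ has integer jumps, all of its values share a common fractional part $\phi \in [0,1)$, so $N_n A_n = f(A_n) - k_n$ lies in $\phi + \Z$ with $N_n A_n > 0$; this forces $N_n A_n \geq \phi > 0$ when $\phi > 0$ and $N_n A_n \geq 1$ when $\phi = 0$, each of which contradicts $N_n A_n \to 0$.
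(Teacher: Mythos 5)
Your overall architecture is sound, and the finiteness half is organized genuinely differently from the paper's. After the same reduction to the case $C(A),D(A)<\infty$ for all $A$, the paper does not argue via limit points: it shows directly that $A_{\sup}:=\sup\mathcal{A}<1$ and $A_{\inf}:=\inf\mathcal{A}>0$ (using the constant fractional part $\eta$ of $C(A)-D(A)$, the chain $m(A)+1\le N(A)+k(A)\le C(A)/A$, and the jump formula $m(A')-m(A)=|\{i:A\le d_i<A'\}|$, letting $A'\to1$ to force $m(A)+1\le C(A)$ against $C(A)<m(A)+1$ coming from $D(A)\to0$), and then bounds $m(A)$, $N(A)+k(A)$, and finally $N(A)$ uniformly on $[A_{\inf},A_{\sup}]$, so that $A=(m(A)+\eta-k(A))/N(A)$ takes finitely many values. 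Your subsequence argument reaches essentially the same contradiction: your endpoint case ($N_nA_n\to0$ via $C(A_n)\to0$ and dominated convergence against the majorant $d_i(1-d_i)$, versus the lower bound $N_nA_n\ge\phi$ or $\ge1$ from the common fractional part) is the mirror image, at $A\to0$, of the paper's computation at $A'\to1$; your interior case (no accumulation of $\{d_i\}$ in $(0,1)$, hence local constancy of $f$ and boundedness of $N_n,k_n$) is an extra step the paper never needs because it controls everything between the global inf and sup at once. Both routes rest on the same two ingredients, the necessity direction of Theorem \ref{3ptg} and the integrality of the jumps of $C-D$, so the difference is one of packaging; yours localizes the contradiction, the paper's is shorter because it skips the interior analysis. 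All the estimates you quote check out, including $N_nA_n\le C(A_n)+A_nD(A_n)/(1-A_n)$, which follows from eliminating $k$ between \eqref{3pttrace} and \eqref{3ptmaj}.

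One step as written is false: in the summable sub-case you invoke a ``Schur--Horn bound $A\ge d_{K+1}^{\downarrow}$.'' Majorization does not give termwise domination of the rearranged diagonal by the eigenvalue sequence; for instance $\lambda=(1,\tfrac12,\tfrac12,0,\dots)$ admits the diagonal $(0.7,0.7,0.6,0,\dots)$, where $d_2^{\downarrow}=0.7>\tfrac12=A$ with $K=1$. What Theorem \ref{frsh} gives at $n=K+1$ is only $A\ge\sum_{i=1}^{K+1}d_i^{\downarrow}-K$, which can be vacuous. To cap $N$ for fixed $K$ one should instead use the full condition \eqref{fr3pt2}: with $NA=\sum d_i-K=:T>0$ it reads $C(A)+n_0(A)A\ge T+KA$, and since $C(A)\to0$ and $n_0(A)A\to0$ as $A\to0^+$ for a summable sequence, $A$ is bounded below and hence $N$ above. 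Better still, the carve-out is unnecessary: the necessity direction of Theorem \ref{3ptg} carries no non-summability hypothesis, and your limit-point argument never uses $\sum d_i=\sum(1-d_i)=\infty$, so it already covers the summable cases. With that sub-case either repaired or deleted, the plan is correct.
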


\begin{proof} For each $A\in (0,1)$ define
\[C(A) = \sum_{d_{i}<A}d_{i}\quad\text{and}\quad D(A) = \sum_{d_{i}\geq A}(1-d_{i}).\]
Note that if $C(A)+D(A)=\infty$ for some $A\in(0,1)$ then $C(A)+D(A)=\infty$ for all $A\in(0,1)$. By Theorem \ref{3ptg} we have $\mathcal{A}=(0,1)$. Thus, we will assume $C(A),D(A)<\infty$ for all $A\in (0,1)$.

First, we wish to show that $\sup\mathcal{A}<1$. Assume to the contrary that $\sup\mathcal{A}=1$. Note that there exists $\eta\in[0,1)$ such that $\eta = C(A)-D(A) - \lfloor C(A)-D(A)\rfloor$ for all $A\in (0,1)$. Thus, for each $A\in(0,1)$ there exists $m(A)\in\Z$ such that
\[C(A)-D(A) = m(A) +\eta.\]
By Theorem \ref{3ptg}, for each $A\in\mathcal{A}$ there exists $N(A)\in\N$ and $k(A)\in\Z$ such that
\begin{equation}\label{exthm5}m(A) + \eta = C(A)-D(A) = N(A)A+k(A)\quad\text{and}\quad C(A)\geq (N(A)+k(A))A.\end{equation}
Using \eqref{exthm5} we have
\begin{equation}\label{exthm4}m(A) + \eta = N(A)A+k(A)< N(A)+k(A)\leq \frac{C(A)}{A}.\end{equation}
Since $\eta\geq 0$ and $m(A),N(A),k(A)\in\Z$, we can also see
\begin{equation}\label{exthm3}m(A)+1\leq N(A)+k(A).\end{equation}
Thus, for each $A\in\mathcal{A}$ we must have
\begin{equation}\label{exthm1}A(m(A) + 1)\leq C(A).\end{equation}
Next, note that for $A,A'\in\mathcal{A}$ with $A'>A$ we have
\begin{align*}
m(A')-m(A) & = C(A') - C(A) + D(A) - D(A') = \sum_{A\leq d_{i}<A'}d_{i} + \sum_{A\leq d_{i}<A'}(1-d_{i})\\
 & = |\{i\in\N: A\leq d_{i}<A'\}|.
\end{align*}
Using this gives
\begin{equation}\label{exthm2}C(A')- C(A)= \sum_{d_{i}<A'}d_{i} - \sum_{d_{i}<A}d_{i} = \sum_{A\leq d_{i}<A'}d_{i}<  A'(m(A')-m(A)).\end{equation}
Putting together \eqref{exthm1} and \eqref{exthm2} we have
\[A'(m(A')+1)-C(A)\leq C(A')-C(A) < A'(m(A')-m(A)).\]
Rearranging this inequality gives
\[A'(m(A)+1)< C(A).\]
Since $\sup\mathcal{A}=1$ we can let $A'\to 1$ and we have
\[m(A)+1\leq C(A).\]
Finally, since $D(A)\to0$ as $A\to 1$, for large enough $A$ we have $D(A)<1-\eta$ and thus
\[C(A)<C(A)-D(A) - \eta + 1 = m(A) + 1\]
which gives a contradiction, and shows that $A_{\sup}:=\sup\mathcal{A}<1$. A symmetric argument shows that $A_{\inf}:=\inf\mathcal{A}>0$.

Since $C(A)$ and $m(A)$ are nondecreasing as $A\to 1$, for each $A\in\mathcal{A}$ we have $C(A_{\inf})\leq C(A)\leq C(A_{\sup})$ and $m(A_{\inf})\leq m(A)\leq m(A_{\sup})$. Using \eqref{exthm4} and \eqref{exthm3}, for $A\in\mathcal{A}$ we have
\[m(A_{\inf}) + 1 \leq m(A) + 1 \leq N(A)+k(A) \leq \frac{C(A)}{A}\leq \frac{C(A_{\sup})}{A_{\inf}}.\]
This shows that $\{N(A)+k(A):A\in\mathcal{A}\}$ and $\{m(A):A\in\mathcal{A}\}$ are finite sets of integers. Next, we note that for $A\in\mathcal{A}$ we have
\[N(A)A_{\sup}\geq N(A)A = m(A) + \eta - k(A) \geq m(A_{\inf}) + \eta + N(A) - \frac{C(A_{\sup})}{A_{\inf}}. \]
Rearranging this inequality gives
\[N(A) \leq \frac{\frac{C(A_{\sup})}{A_{\inf}} - m(A_{\inf}) - \eta}{1-A_{\sup}},\]
which implies that $\{N(A):A\in\mathcal{A}\}\subset\N$ is finite. Since $\{N(A)+k(A):A\in\mathcal{A}\}$ is finite, we also see that $\{k(A):A\in\mathcal{A}\}$ is finite. Finally, we note that for $A\in\mathcal{A}$ we have
\[A = \frac{m(A) + \eta - k(A)}{N(A)},\]
which clearly implies that $\mathcal{A}$ is finite. \end{proof}

Next, we will explicitly find the set $\mathcal{A}$ from Theorem \ref{exthm} for two particular sequences $\{d_{i}\}$.

\begin{ex} Let $\beta\in(0,1/2)$ and define the sequence $\{d_{i}\}_{i\in\Z\setminus\{0\}}$ by
\[d_{i} = \begin{cases}1-\beta^{i} & i>0\\ \beta^{-i} & i<0.\end{cases}\]
Define the set
\[\mathcal{A}_{\beta} = \big\{A\in (0,1):\exists\,E\geq0\text{ with } \sigma(E) = \{0,A,1\}\text{ and diagonal }\{d_{i}\}\big\}.\]
We will show that 
\[\mathcal{A}_{\beta} = \begin{cases}\{\frac{1}{3},\frac{1}{2},\frac{2}{3}\} & \frac{-1+\sqrt{13}}{6}\leq\beta<1/2\\ \{\frac{1}{2}\} & 1/3\leq\beta<\frac{-1+\sqrt{13}}{6}\\ \varnothing & 0<\beta<1/3.\end{cases}\]

First, assume $A\in \mathcal{A}_{\beta}\cap(\beta,1-\beta]$, and thus
\[C = \sum_{d_{i}<A}d_{i} = \sum_{i=1}^{\infty}\beta^{i} = \frac{\beta}{1-\beta}\quad\text{and}\quad D = \sum_{d_{i}\geq A}(1-d_{i}) = \sum_{i=1}^{\infty}\beta^{i} = \frac{\beta}{1-\beta}.\]
From Theorem \ref{3ptg} there exists $N\in \N$ and $k\in\Z$ such that
\begin{equation}\label{ex1} 0 = C-D = NA+k\end{equation}
\begin{equation}\label{ex2} \frac{\beta}{1-\beta} = C \geq (N+k)A.\end{equation}
Using \eqref{ex1} and $A\leq 1-\beta$ we have
\[0 < \beta N = NA+k+\beta N \leq N(1-\beta)+k + \beta N=N+k,\]
and thus $N+k>0$. Now, we use \eqref{ex2}, $\beta<A$, then $\beta<1/2$ to see
\[ N+k < (N+k)\frac{A}{\beta} \leq \beta^{-1}\frac{\beta}{1-\beta} = \frac{1}{1-\beta} < 2.\]
Since $N+k\in\Z$ we see that $N+k=1$. Solving for $A$ in \eqref{ex1} we have
\[A = \frac{-k}{N} = \frac{N-1}{N} = 1-\frac{1}{N}.\]
Since $\{d_{i}\}$ is symmetric about $1/2$, if $A\in\mathcal{A}_{\beta}$ then $1-A\in\mathcal{A}_{\beta}$. And since $A=1-1/N$ for some $N\in\N$, the only possible value of $N$ is $2$. For $N=2$ we have $k=-1$ and $A=1/2$, which satisfy \eqref{3pttrace} and \eqref{3ptmaj} if and only if $\beta\geq 1/3$. Thus, $\mathcal{A}_{\beta}\cap(\beta,1-\beta]=\{1/2\}$ for $\beta\geq 1/3$ and $\mathcal{A}_{\beta}\cap(\beta,1-\beta]=\varnothing$ for $\beta<1/3$.

Next, assume $A\in\mathcal{A}_{\beta}\cap(1-\beta^{m},1-\beta^{m+1}]$ for some $m\in\N$. We have
\[C = \frac{\beta}{1-\beta} + \sum_{i=1}^{m}(1-\beta^{i}) = m + \frac{\beta^{m+1}}{1-\beta}\quad\text{and}\quad D =\sum_{i=m+1}^{\infty}\beta^{i} = \frac{\beta^{m+1}}{1-\beta}.\]
By Theorem \ref{3ptg} there exist $N\in\N$ and $k\in\Z$ such that
\begin{equation}\label{ex3} m = C-D = NA + k\end{equation}
\begin{equation}\label{ex4} m + \frac{\beta^{m+1}}{1-\beta} = C \geq (N+k)A.\end{equation}
Using \eqref{ex3} and $A\leq 1-\beta^{m+1}$ we have
\begin{equation}\label{ex5}m < m+N\beta^{m+1}= NA+k + N\beta^{m+1}\leq N(1-\beta^{m+1}) + k + N\beta^{m+1} = N+k.\end{equation}
Using \eqref{ex4} and $A>1-\beta^{m}$ we have
\[m + \frac{\beta^{m+1}}{1-\beta}\geq (N+k)A>(N+k)(1-\beta^{m}).\]
Rearranging, and using $\beta<1/2$ we have
\begin{equation}\label{ex6}N+k <\left(m + \frac{\beta^{m+1}}{1-\beta}\right)\frac{1}{1-\beta^{m}}<\left(m+\frac{1}{2^{m}}\right)\frac{2^{m}}{2^{m}-1} = m+\frac{1+m}{2^{m}-1}.\end{equation}
A simple calculation shows that $\frac{1+m}{2^{m}-1}\leq 1$ for all $m\geq 2$. Combining this with \eqref{ex5} shows that $m<N+k<m+1$ for $m\geq 2$. Since $N+k\in\Z$ this shows that $\mathcal{A}_{\beta}\cap(1-\beta^{2},1)=\varnothing$.

Assume $A\in(1-\beta,1-\beta^{2}]$. In this case \eqref{ex5} and \eqref{ex6} imply $1<N+k<3$, which implies $N+k=2$. Solving \eqref{ex3} for $A$ and using $N+k=2$ we have
\[A = \frac{1-k}{N} = 1-\frac{1}{N}.\]
Since $A>1-\beta>1/2$ this implies $N>1/\beta>2$. From \eqref{ex4} we see
\[1+\frac{\beta^{2}}{1-\beta}\geq 2A = 2-\frac{2}{N}.\]
Rearranging this we have
\[N\leq \frac{2-2\beta}{1-\beta-\beta^{2}}.\]
For $\beta<\frac{-1+\sqrt{13}}{6}$ we have $\frac{2-2\beta}{1-\beta-\beta^{2}}<3$ and thus $N<3$. Combined with the fact that $N>2$, we see $\mathcal{A}\cap(1-\beta,1-\beta^{2}]=\varnothing$ for $\beta<\frac{-1+\sqrt{13}}{6}$. Next, assume $\frac{-1+\sqrt{13}}{6}\leq\beta<1/2$. Then $\frac{2-2\beta}{1-\beta-\beta^{2}}<4$ and we must have $N=3$, $A=\frac{2}{3}$ and $k=-1$. It is clear that \eqref{3pttrace} holds. For \eqref{3ptmaj}, we use the fact that $\beta\geq\frac{-1+\sqrt{13}}{6}$ to see
\[C = 1+\frac{\beta^{2}}{1-\beta}\geq \frac{4}{3} = (N+k)A.\]
Thus, by Theorem \ref{3pt}, for $\beta\geq\frac{-1+\sqrt{13}}{6}$ we have $2/3\in\mathcal{A}_{\beta}$. Since $\{d_{i}\}$ is symmetric about $1/2$, we see that $\mathcal{A}_{\beta}\cap(0,\beta] = \{1/3\}$ for $\frac{-1+\sqrt{13}}{6}\leq \beta<1/2$ and the set is empty for $\beta<\frac{-1+\sqrt{13}}{6}$. $\Box$

\end{ex}

In the above example, note that for any choice of $\beta$, we have $C-D\in\Z$ for any choice of $A\in (0,1)$. Thus, Theorem \ref{Kadison} implies that there is a projection with diagonal $\{d_{i}\}$. However, if $\beta<1/3$ then there is no $A\in(0,1)$ so that $\{d_{i}\}$ is the diagonal of a self-adjoint operator $E$ with $\sigma(E) = \{0,A,1\}$. The next example is not the diagonal of any projection, but we will show that it is the diagonal of many different operators with three point spectrum.

\begin{ex} Let $\{d_{i}\}_{i\in\Z}$ be given by 
\[d_{i} = \begin{cases} 2^{i-1} & i\leq 0\\ 1-2^{-i-1} & i>0.\end{cases}\]
Let
\[\mathcal{A} = \big\{A\in (0,1):\exists\,E\geq0\text{ with } \sigma(E) = \{0,A,1\}\text{ and diagonal }\{d_{i}\}\big\}.\]
We claim that
\[\mathcal{A} =\left\{\frac{1}{8},\frac{1}{6},\frac{1}{4},\frac{1}{2},\frac{3}{4},\frac{5}{6},\frac{7}{8}\right\}.\]
The sequence $\{d_{i}\}$ is symmetric about $1/2$, and thus $A\in\mathcal{A}$ implies $1-A\in\mathcal{A}$. Hence, it is enough to show that
\[\mathcal{A}\cap\left[\tfrac{1}{2},1\right) = \left\{\frac{1}{2},\frac{3}{4},\frac{5}{6},\frac{7}{8}\right\}.\]

Assume $A\in\mathcal{A}\cap(1-2^{-m},1-2^{-m-1}]$ for some $m\geq 1$. We have
\[C = m - \frac{1}{2} + \frac{1}{2^{m}}\quad\text{and}\quad D = \frac{1}{2^{m}}.\]
Since $A\in\mathcal{A}$ Theorem \ref{3ptg} implies that there exist $N\in\N$ and $k\in\Z$ such that
\begin{equation}\label{ex2.1}C-D = m-\frac{1}{2} = NA+k\end{equation}
\begin{equation}\label{ex2.2}C = m-\frac{1}{2} + 2^{-m}\geq (N+k)A.\end{equation}
Using \eqref{ex2.1} and $A\leq 1-2^{-m-1}$ we have
\begin{equation}\label{ex2.3}m-1<m-\frac{1}{2} + N2^{-m-1} = NA + k + N2^{-m-1} \leq N(1-2^{-m-1}) + k + N2^{-m-1} = N+k.\end{equation}
From \eqref{ex2.2} and $A>1-2^{-m}$ we have
\[m-\frac{1}{2} + 2^{-m}\geq (N+k)A> (N+k)(1-2^{-m}).\]
Rearranging gives
\begin{equation}\label{ex2.4}N+k < \left(m-\frac{1}{2} + 2^{-m}\right)\frac{2^{m}}{2^{m}-1} = m + \frac{m-2^{m-1} + 1}{2^{m}-1}.\end{equation}
For $m\geq 3$, a simple calculation shows $\frac{m-2^{m-1} + 1}{2^{m}-1}\leq 0$ and thus $N+k<m$. However, from \eqref{ex2.3} we have $N+k>m-1$. Since $N+k\in\Z$ this is a contradiction and shows that $\mathcal{A}\cap(1-2^{-m},1-2^{-m-1}]=\varnothing$ for $m\geq 3$. 

One can easily check that $A=1/2$ satisfies \eqref{3pttrace} and \eqref{3ptmaj} with $N=1$ and $k=-1$ (or $N=3$ and $k=-2$). All that is left is to find $\mathcal{A}\cap(1-2^{-m},1-2^{-m-1}]$ for $m=1$ and $2$. The calculation for each $m$ is similar, so the case of $m=2$ will be left to the reader.

Assume $A\in\mathcal{A}\cap(1/2,3/4]$. In this case we have $C=1$ and $D=1/2$. From \eqref{ex2.3} and \eqref{ex2.4} we have $0<N+k<2$ and thus $N+k=1$. Using this and solving \eqref{ex2.1} for $A$ we have
\[A = \frac{\frac{1}{2}-k}{N} = \frac{N-\frac{1}{2}}{N} = 1-\frac{1}{2N}.\]
From the inequalities $1/2<A=1-1/(2N)\leq 3/4$ we obtain $1<N\leq 2$. Thus $N=2,A=3/4$ and $k=-1$. One can easily check that \eqref{3pttrace} and \eqref{3ptmaj} are satisfied for these values of $A,N$ and $k$. $\Box$
\end{ex}

\end{document}